\documentclass[11pt]{article}
\usepackage{amsmath}
\usepackage{amsthm}
\usepackage{amssymb}
\usepackage[dvipdfm]{hyperref}

\newcommand{\C}{\mathbb{C}}
\newcommand{\R}{\mathbb{R}}

\pagestyle{myheadings}

\theoremstyle{plain}
\newtheorem{theorem}{Theorem}[section]
\newtheorem{proposition}[theorem]{Proposition}
\newtheorem{lemma}[theorem]{Lemma}
\newtheorem{Corollary}[theorem]{Corollary}

\theoremstyle{definition}
\newtheorem{definition}[theorem]{Definition}


\makeatletter
\@addtoreset{equation}{section}
\makeatother

\begin{document}


\title{A perturbation and generic smoothness of the Vafa--Witten moduli spaces 
on closed symplectic four-manifolds  
} 
\author{  
Yuuji Tanaka}
\date{}


\maketitle

\begin{abstract}
We prove a Freed--Uhlenbeck style generic smoothness
 theorem for the moduli space of solutions to the Vafa--Witten equations  
on a closed symplectic
 four-manifold by using a method developed by Feehan 
for the study of the
 $PU(2)$-monopole equations on smooth closed four-manifolds. 
We introduce a set of perturbation terms to the Vafa--Witten equations, 
 and prove that the moduli space of solutions to the perturbed
 Vafa--Witten equations  
on a closed symplectic four-manifold for the structure group $SU(2)$ or $SO(3)$
is a smooth manifold of dimension zero for a generic choice of
 the perturbation parameters. 
\end{abstract}





\section{Introduction}

In this article, 
we consider the Vafa--Witten equations (\cite{VW}, \cite{BM}, \cite{Hay}, \cite{W}) 
on a compact symplectic four-manifold. 
First let us introduce the equations in their original form.

\paragraph{The Vafa--Witten equations.}

Let $X$ be a closed, oriented, smooth Riemannian four-manifold with Riemannian
metric $g$, and let $P \to X$ be a principal $G$-bundle over $X$ 
with compact Lie group $G$. 
We denote by $\mathcal{A}_{P}$ the set of all connections of $P$ 
and by $\Omega^{+} (X, \mathfrak{g}_{P})$ the set of self-dual two-forms
valued in the adjoint bundle $\mathfrak{g}_{P}$ of $P$. 
We consider the following equations for a triple $(A, B , \Gamma) \in
\mathcal{A}_{P} 
\times \Omega^{+} (X , \mathfrak{g}_{P}) \times \Omega^{0} (X ,
\mathfrak{g}_{P})$: 
\begin{gather}
d_{A} \Gamma + d_{A}^{*} B = 0,  
\label{VW1} \\ 
F_{A}^{+}  + \frac{1}{8} [B.B ] + \frac{1}{2} [ B , \Gamma] =0 , 
\label{VW2}
\end{gather}
where $[B. B ] \in \Omega^{+} (X, \mathfrak{g}_{P})$ is 
defined by a point-wise Lie-algebraic structure on $\Lambda^{+}$
 together with the bracket of $\mathfrak{g}_{P}$ 
(see \cite[\S A.1]{BM} or \cite[\S 2]{tanaka1} for more detail). 
We call these equations the {\it Vafa--Witten equations}. 
The equations \eqref{VW1} and \eqref{VW2} with a gauge fixing equation
form an elliptic system with the index always being zero.

\paragraph{The equations on compact symplectic four-manifolds and a perturbation.}

We rewrite the equations \eqref{VW1} and \eqref{VW2} when the underlying
manifold $X$ is a compact symplectic four-manifold.

Let $X$ be a compact symplectic four-manifold with symplectic form
$\omega$. 
We take an almost complex structure $J$ compatible with 
the symplectic form $\omega$. 
In this setting, the equations \eqref{VW1} and \eqref{VW2} can be
written as follows (see
Section \ref{sec:pert} for more detail).  
\begin{gather*}
\bar{\partial}_{A} \alpha + \bar{\partial}_{A}^{*} \beta = 0 , 
\\
F_{A}^{0,2} + \frac{1}{2} [ \alpha , \beta ] = 0 , 
\, \, 
\omega^2 \wedge \left( i \Lambda F_{A}^{1,1} +  \frac{1}{2} [ \alpha ,
 \alpha^{*} ] 
 \right)+ [ \beta , \beta^{*} ]= 0 ,  
\end{gather*} 
where $\Lambda := (\wedge \omega)^{*}$, and $\alpha \in
\Omega^{0,0} (X , \mathfrak{g}_{P})$, $\beta \in \Omega^{0,2} (X, \mathfrak{g}_{P})$.

We then introduce the following perturbation for the Vafa--Witten
equations on a compact symplectic four-manifold: 
\begin{equation}
\bar{\partial}_{A} \alpha + \bar{\partial}_{A}^{*} \beta 
+ \rho (\theta) (\alpha + \beta ) = 0 , 
\label{eq:pVW1_i}
\end{equation}
\begin{equation}
F_{A}^{0,2} + \frac{1}{2} \tau_{1} [ \alpha , \beta ] = 0 , 
\, \, 
\omega^2  \wedge 
\left(i \Lambda F_{A}^{1,1}  + \frac{1}{2} \tau_{2} [ \alpha , \alpha^{*} ]  \right)+ \tau_{3} [ \beta , \beta^{*} ] = 0 , 
\label{eq:pVW2_i}
\end{equation}
where 
$\rho : T^{*} X \otimes \C \to \text{Hom}_{\C} (\Lambda^{0,0} \oplus
\Lambda^{0,2} , \Lambda^{0,1})$ is the Clifford multiplication map, 
$\tau_{1} \in C^{r} (GL (\Lambda^{0,2}))$, 
$\tau_{2} \in C^{r} (GL (\Lambda^{0,0}))$, 
$\tau_{3} \in C^{r} (GL (\Lambda^{2,2}))$ 
and $\theta \in T^{*} X \otimes \C$ 
are perturbation parameters.   
We write $\tau := ( \tau_1 , \tau_2 , \tau_3)$, 
and denote by $\mathcal{P}_1$ the Banach space of the perturbation parameters
$(\tau , \theta)$, 
namely, we set 
$\mathcal{P}_1 := 
C^r (GL (\Lambda^{0,2})) \times 
C^r (GL (\Lambda^{0,0})) \times 
C^r (GL (\Lambda^{2,2})) \times 
C^r ( \Lambda^{1} \otimes \C )$.

This perturbation 
does not depend upon connections. 
Hence, one needs not be careful about the compatibility with the
bubbling-off of connections.

\paragraph{Generic smoothness of the moduli spaces.}

Before stating results in this article, let us introduce some
terminology here first.

\begin{definition}
A connection $A$ of a principal $G$-bundle over $X$ is said to be 
{\it irreducible} if the stabilizer $Z_{A}$ in $\mathcal{G}_{P}$ coincides
with the centre of the group $G$, and {\it reducible} otherwise. 
\end{definition}

We also introduce the following notion of
{\it rank} for sections.

\begin{definition}
We say a $\mathfrak{g}_{P}$-valued form $\alpha + \beta \in \Gamma ( 
\mathfrak{g}_{P} \otimes ( \Lambda^{0,0} 
 \oplus \Lambda^{0,2} ) )$ is 
{\it of rank}  
 $r$ if, when considered as a section of $\text{Hom} ((\Lambda^{0,0} \oplus
 \Lambda^{0,2})^{*} , \mathfrak{g}_{P} )$, 
the section $( \alpha + \beta ) (x)$ has rank less than or equal to $r$ at every
point $x \in X$ with equality at some point. 
\end{definition}

We then denote by $\mathcal{M}^{*}_{\diamond} (\tau , \theta)$ the
moduli space of solutions $(A, (\alpha ,\beta))$ 
to the perturbed Vafa--Witten equations 
\eqref{eq:pVW1_i} and \eqref{eq:pVW2_i} with $A$ irreducible, 
$\alpha - \bar{\alpha} =0$ and $\alpha + \beta $
being of rank three. 
We prove the following in Section \ref{sec:rankthree}:

\begin{proposition}
Let $X$ be a closed symplectic four-manifold, and let $P \to X$ be a
 principal $G$-bundle over $X$, where we assume that $G$ is either $SU(2)$ or $SO(3)$. 
Then there is a first category subset $\mathcal{P}_1' \subset \mathcal{P}_1$ 
such that, for each $(\tau , \theta) \in \mathcal{P}_1 \setminus \mathcal{P}_1'$, 
the moduli space 
$\mathcal{M}^{*}_{\diamond} (\tau , \theta)$ is 
a smooth manifold of dimension zero.  
\label{th:rank3}
\end{proposition}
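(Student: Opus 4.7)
The plan is to apply the Sard--Smale theorem to the projection from a parametrized moduli space onto the Banach space $\mathcal{P}_1$ of perturbation parameters. First I introduce Sobolev completions of the configuration and perturbation spaces and form the parametrized map $\mathfrak{F} : \mathcal{C}^{*}_{\diamond} \times \mathcal{P}_1 \to \mathcal{E}$ obtained by collecting the left-hand sides of the perturbed equations \eqref{eq:pVW1_i} and \eqref{eq:pVW2_i} together with a Coulomb gauge-fixing term, where $\mathcal{C}^{*}_{\diamond}$ is the open subset of configurations cutting out the irreducible, real-$\alpha$, rank-three locus. The parametrized moduli space is $\widetilde{\mathcal{M}}^{*}_{\diamond} := \mathfrak{F}^{-1}(0)/\mathcal{G}_P$, and the goal is to show it is a smooth Banach manifold for which the projection $\pi : \widetilde{\mathcal{M}}^{*}_{\diamond} \to \mathcal{P}_1$ is a Fredholm map of index zero. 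Sard--Smale then produces a first-category set $\mathcal{P}_1' \subset \mathcal{P}_1$ of critical values, and each regular fibre $\pi^{-1}(\tau, \theta) = \mathcal{M}^{*}_{\diamond}(\tau, \theta)$ is automatically a smooth zero-manifold.

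The core step is surjectivity of the total linearization $d\mathfrak{F}$ at every zero. The partial linearization in the configuration directions is the deformation operator of the Vafa--Witten equations together with Coulomb gauge-fixing, an elliptic operator of index zero; thus surjectivity reduces to showing that variations in $(\tau, \theta)$ cover the finite-dimensional cokernel. Given $\eta$ in the $L^2$-cokernel, pairing against variations $\dot\tau_1, \dot\tau_2, \dot\tau_3$ of the multipliers on the algebraic terms $[\alpha, \beta]$, $[\alpha, \alpha^{*}]$ and $[\beta, \beta^{*}]$ forces the corresponding components of $\eta$ to vanish pointwise on the open subset of $X$ where these brackets span the respective bundles. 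Pairing against variations of $\theta$ via the Clifford multiplication $\rho(\theta)(\alpha + \beta)$ handles the $\Lambda^{0,1} \otimes \mathfrak{g}_P$-component of $\eta$ in the first equation: the rank-three hypothesis ensures that $\theta \mapsto \rho(\theta)(\alpha + \beta)$ is a pointwise surjection onto $\Lambda^{0,1} \otimes \mathfrak{g}_P$ off a closed nowhere-dense subset, and irreducibility together with the reality condition $\alpha = \bar\alpha$ remove the remaining algebraic obstruction coming from the centralizer in $\mathfrak{g}_P$.

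Propagation of the pointwise vanishing of $\eta$ from an open set to all of $X$ uses the elliptic unique continuation principle applied to the adjoint linearization, in the spirit of Feehan's treatment of the $PU(2)$-monopole system. Once surjectivity is in place the implicit function theorem gives the Banach-manifold structure on $\widetilde{\mathcal{M}}^{*}_{\diamond}$, Fredholmness of $\pi$ follows, and the index count (the same as for the unperturbed deformation complex) delivers dimension zero. A bootstrapping elliptic regularity argument removes the dependence on the Sobolev exponent used in the Banach set-up. The main obstacle, I expect, is precisely the surjectivity argument of the middle paragraph: carefully unwinding how the four perturbation families combine to span the cokernel, and verifying that the rank-three, reality and irreducibility hypotheses are exactly the open conditions that eliminate the algebraic degeneracies that would otherwise block surjectivity.
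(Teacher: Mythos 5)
Your overall architecture matches the paper's: set up the parametrized moduli space, prove regularity by showing variations in $(\tau,\theta)$ annihilate any putative cokernel element pointwise on the rank-three locus, propagate vanishing by unique continuation, and then apply Sard--Smale to the projection onto $\mathcal{P}_1$. The paper does exactly this (Proposition~\ref{prop:rkth} followed by Sard--Smale).

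Two technical points are stated loosely enough to warrant correction. First, you assert that the rank-three hypothesis makes $\theta \mapsto \rho(\theta)(\alpha+\beta)$ a pointwise surjection onto $\Lambda^{0,1}\otimes\mathfrak{g}_P$; this cannot be literally true, since $T^*X\otimes\C$ has real dimension $8$ while $\Lambda^{0,1}\otimes\mathfrak{g}_P$ has real dimension $12$. The actual mechanism, which the paper imports as Feehan's Lemma~2.3 (quoted as Lemma~\ref{prop:F1}), is an orthogonality/rank inequality: vanishing of $\langle \rho(\underline{\theta})(\alpha+\beta),\delta\rangle$ for all $\underline{\theta}$ forces the images of $\alpha+\beta$ and $\delta$ in $\mathfrak{g}_P$ to be orthogonal, so $\mathrm{rank}_\R(\alpha+\beta)(x)+\mathrm{rank}_\R\,\delta(x)\le 3$; rank three for $\alpha+\beta$ on an open set then kills $\delta$ there. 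Second, for the $\tau$-variations to kill the second component $v$ of the cokernel, you need to know that the brackets $[\alpha,\beta]$, $[\alpha,\alpha^*]$, $[\beta,\beta^*]$ jointly have rank three in $\mathfrak{g}_P$ precisely where $\alpha+\beta$ does; this is not automatic and is supplied in the paper by Mares' rank lemma (Lemma~\ref{lem:BM}). Your phrase ``on the open subset where these brackets span'' glosses over the fact that this subset coincides with the rank-three locus of $\alpha+\beta$ only because of that lemma. Neither issue changes the strategy, but both need to be filled in before the argument is complete.
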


Here, a subset of $S'$ of a topological space $S$ is said to be 
a {\it first category subset} if 
$S'$ is a countable union of closed subsets of $S$ with empty
interior. 
We mean a {\it generic choice} of elements in $S$ 
by taking an element from $S \setminus S'$.

We next consider 
irreducible solutions to the equations with rank less than or equal to two, and show that there are no such solutions  
for a generic choice of perturbation parameters. 
In order to do this we further perturb the equations, that corresponds
to moving metrics or almost complex structures of the underlying manifold. 
More precisely, we introduce an extra perturbation parameter 
$f \in C^{r} (GL (T^*  X))$, and consider the following equations: 
\begin{equation}
\bar{\partial}_{A,f} \alpha + \bar{\partial}_{A,f}^{*} \beta 
+ \rho (f( \theta ) ) (\alpha + \beta ) = 0 , 
\label{eq:p2VW1_i}
\end{equation}
\begin{equation}
P^{0,2}_{f} ( F_{A} ) + \frac{1}{2} \tau_{1} [ \alpha , \beta ] = 0 , 
\, \, 
\omega^2 \wedge \left( i \Lambda P^{1,1}_{f} ( F_{A} )  
+ \frac{1}{2} \tau_{2} [ \alpha , \alpha^{*} ] \right)+ \tau_{3} [ \beta , \beta^{*} ] = 0 , 
\label{eq:p2VW2_i}
\end{equation}
where $P^{0,2}_{f}$ and $P^{1,1}_{f}$ are the projections to $(0,2)$ and
$(1,1)$-parts with respect to the almost complex structure $f^{*}J$, 
and 
\begin{equation*}
\bar{\partial}_{A,f} 
 := \sum f (v^{i} ) \wedge \nabla_{A,v_{i}},
 \quad 
\bar{\partial}_{A,f}^{*} 
 := - \sum \iota ( f ( v^{i} ) ) \nabla_{A, v_{i}}, 
\end{equation*}
where $\{ v^{i} \}$ is an orthonormal frame of $\Lambda^{0,1}$, 
and $\{ v_{i} \}$ is its dual. 
These $\bar{\partial}_{A,f}$ and $\bar{\partial}_{A,f}^{*}$ can be seen
as a variation of the Dirac operator corresponding to moving metrics or
almost complex structures of the
underlying manifold.

We denote by 
$\mathcal{P}_2 := 
C^r (GL (T^{*} X )) \times 
C^r ( \Lambda^{1} \otimes \C )
$ the perturbation parameter space and by $\mathcal{M}^{*, 0} (f, \theta)$
the moduli space of 
solutions $(A, (\alpha, \beta))$ to the equations \eqref{eq:p2VW1_i} and \eqref{eq:p2VW2_i}
with $A$ irreducible, $\alpha - \bar{\alpha} =0$ and $(\alpha , \beta) \neq 0$. 
We prove the following in Section \ref{sec:rankletwo}:

\begin{proposition}
Let $X$ be a closed symplectic four-manifold, and let $P \to X$ be a
 principal $G$-bundle over $X$, where the structure group $G$ is either $SU(2)$ or $SO(3)$. 
Then there is a first category subset $\mathcal{P}_2' \subset \mathcal{P}_2$ 
such that for all $(f ,  \theta) \in \mathcal{P}_2 \setminus \mathcal{P}_2'$, 
the moduli space $\mathcal{M}^{*, 0} (f,  \theta)$ 
contains no solutions $(A, (\alpha ,\beta))$ 
 to the perturbed Vafa--Witten equations \eqref{eq:p2VW1_i} and \eqref{eq:p2VW2_i} such that
$A$ is irreducible, $\alpha - \bar{\alpha} =0$ and $\alpha + \beta$ is of rank one or two. 
\label{th:norot}
\end{proposition}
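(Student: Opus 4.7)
The plan is to apply the Sard--Smale theorem to a parameterized moduli space, in the style of Feehan \cite{F} for $PU(2)$-monopoles. Let $\mathcal{U} \subset (\text{configurations}) \times \mathcal{P}_2$ be the universal moduli space of equivalence classes of quintuples $(A,\alpha,\beta;f,\theta)$ with $A$ irreducible, $\alpha = \bar\alpha$, $\alpha+\beta$ of rank one or two, and $(A,\alpha,\beta)$ solving \eqref{eq:p2VW1_i}--\eqref{eq:p2VW2_i}. Granting that $\mathcal{U}$ is cut out transversely as a smooth Banach manifold, the projection $\pi:\mathcal{U}\to\mathcal{P}_2$ is Fredholm, and Sard--Smale provides a residual set of regular values, with $\mathcal{P}_2'$ being its complement. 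The target is to arrange $\operatorname{ind}\pi<0$, so that each regular fibre is simultaneously a smooth manifold and of negative virtual dimension, and hence empty.

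The principal step is to verify that the linearization of the perturbed Vafa--Witten map $\Psi$ augmented by variations $(\dot f,\dot\theta)$ in the parameters is surjective at every rank-$\le 2$ solution. I would take $(\zeta,\eta)$ in the $L^2$-cokernel; pairing with variations in $(A,\alpha,\beta)$ alone shows that $(\zeta,\eta)$ satisfies the adjoint elliptic system and is in particular smooth. Testing against $\dot\theta = \chi(x)v$, with $\chi$ a bump function centred at a point $x_0\in X$ and $v\in T^{*}X\otimes\C$, forces $\zeta(x_0)$ to annihilate $\rho(v)(\alpha+\beta)(x_0)$ for every $v$. Testing against $\dot f$ supported near $x_0$ yields further pointwise constraints coupling $(\zeta,\eta)(x_0)$ to the complex projections $P^{0,2}_f,P^{1,1}_f$ of $F_A(x_0)$ and to the Clifford factor $\rho(\theta(x_0))(\alpha+\beta)(x_0)$. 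Because the rank hypothesis forces the image of $(\alpha+\beta)(x_0)$ in $\mathfrak{g}_P|_{x_0}$ to be a proper subspace at every $x_0$, a pointwise linear-algebra argument in the spirit of \cite[\S 3]{F} combines these relations to deduce that $(\zeta,\eta)$ vanishes on a non-empty open set. Unique continuation of Aronszajn--Agmon--Nirenberg type for the elliptic adjoint then gives $(\zeta,\eta)\equiv 0$.

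I expect the main obstacle to be precisely this pointwise linear-algebra step: one must check that the two channels $\dot\theta$ (acting through Clifford multiplication) and $\dot f$ (acting through the deformation of the almost complex structure and of the projections on two-forms) together span enough directions in $\mathfrak{g}_P|_{x_0}$ to reach the cokernel of $(\alpha+\beta)(x_0)$. These channels couple to the cokernel through specific algebraic combinations, and one must rule out any conspiracy that would leave $(\zeta,\eta)$ supported at a rank-deficient point. Once surjectivity is in hand, the rank condition reduces the virtual dimension of each fibre of $\pi$ below zero, and a standard countable intersection over the regularity $r$ upgrades the $C^r$-genericity to $C^\infty$-genericity, producing the first category set $\mathcal{P}_2'$.
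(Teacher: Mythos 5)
Your overall plan — apply Sard--Smale to a parameterized picture, use the two perturbation channels $\dot\theta$ and $\dot f$ to get surjectivity, then invoke unique continuation — is in the right spirit and partly aligned with the paper. The surjectivity step you sketch is essentially the content of the paper's Lemma \ref{lem:k} and Proposition \ref{prop:psurj}, except that the paper proves surjectivity only for the \emph{period map} $v$ (the map sending $(A,(\alpha,\beta),f,\theta)$ to the perturbed Dirac operator, viewed in $\operatorname{Fred}_n(V_1,V_2)$), not for the full linearized Vafa--Witten map; this weaker statement suffices. Your algebraic worry about the channels $\dot\theta$ and $\dot f$ ``conspiring'' is resolved in the paper by the local splitting $\mathfrak{g}_P|_U = \xi_1 \oplus \xi_2$ and the observation that $\chi \neq 0$ (the second fundamental form) for irreducible $A$, which is close to what you anticipate.

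However, there is a genuine gap in the reduction step. You propose to take $\mathcal{U}$ to be the universal moduli space already \emph{restricted} to rank-one-or-two sections, grant that it is a smooth Banach manifold, and then ``arrange $\operatorname{ind}\pi < 0$'' so that generic fibres of $\pi:\mathcal{U}\to\mathcal{P}_2$ are empty. This cannot be carried out as stated. The rank-$\le 2$ condition is not a finite-codimension constraint: as shown in Proposition \ref{prop:rankot}, the smooth part $\mathcal{Z}'$ of the rank-$\le 2$ locus has \emph{infinite} codimension in $\mathbb{P}(V_1)$. Consequently, if $\mathcal{U}$ were cut out transversely inside the full parametrized moduli space (which has Fredholm projection of index zero), the restriction $\pi|_{\mathcal{U}}$ would fail to be Fredholm — its index would be $-\infty$, not a finite negative number — and Sard--Smale in the form you need cannot be applied to it. Negative finite index is not the mechanism at work.

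The mechanism the paper actually uses, and which your proposal omits, is the version of Sard--Smale stated as Proposition \ref{prop:SS}: one applies it not to $\pi:\mathcal{U}\to\mathcal{P}_2$ but to a map $w'$ from (a stabilization of) the \emph{full} parametrized moduli space into the flag manifold $\operatorname{Flag}_{k,n}(V_1,V_2)$, taking $\mathcal{J}=\tilde{J}_k(\mathcal{Z}')$ to be the rank-one-and-two locus, which has infinite codimension there. Sard--Smale then yields that for generic $p$, the finite-dimensional fibre of the (genuinely Fredholm, index-zero) parametrized moduli space meets $\tilde{J}_k(\mathcal{Z}')$ in a set of infinite codimension — hence is empty. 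To make $w'$ a submersion one needs the stratification of $\operatorname{Fred}_n$ by kernel dimension $k$, the two stabilization steps, and the local-to-global argument over a paracompact cover; these are not cosmetic, since without the flag/Grassmannian framework and the period map $v$ there is no Fredholm map to which Sard--Smale applies. You would need to import this scaffolding (or an equivalent) to close the gap.
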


Our proof of Proposition \ref{th:norot} invokes a series of ideas by
Feehan \cite{F} 
in the study
of the $PU(2)$-monopole equations, which uses a version of
the Sard--Smale theorem (see Section \ref{sec:term}).  
Note that Teleman \cite{Teleman}  independently obtained a similar generic-parameter smoothness result for the $PU(2)$-monopole moduli spaces on closed four-manifolds as well.

We now take   
$\mathcal{P} 
= C^r (GL (T^{*}X)) \times 
C^r (GL (\Lambda^{0,2})) \times 
C^r (GL (\Lambda^{0,0})) \times 
C^r (GL (\Lambda^{2,2})) \times 
C^r ( \Lambda^{1} \otimes \C )$ 
as the perturbation parameter space.  
Combining Propositions \ref{th:rank3} and \ref{th:norot} above, we obtain the following:

\begin{theorem}
Let $X$ be a closed symplectic four-manifold, and let $P \to X$ be a
 principal $G$-bundle over $X$, where the structure group $G$ is either $SU(2)$ or $SO(3)$. 
We denote by $\mathcal{M}^{* , 0} (f, \tau , \theta )$ the moduli space
of solutions $(A, (\alpha ,\beta))$ to the perturbed Vafa--Witten equations \eqref{eq:p2VW1_i} and
\eqref{eq:p2VW2_i} with $A$ irreducible, $\alpha - \bar{\alpha} =0$ and
$(\alpha , \beta ) \neq 0$.  
Then there is a first category subset $\mathcal{P}' \subset \mathcal{P}$ 
such that for all $( f, \tau , \theta) \in \mathcal{P} \setminus \mathcal{P}'$, 
the moduli space $\mathcal{M}^{*, 0} ( f, \tau , \theta)$ 
is a smooth manifold of dimension zero. 
\label{th:main}
\end{theorem}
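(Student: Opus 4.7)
The plan is to stratify the moduli space $\mathcal{M}^{*,0}(f,\tau,\theta)$ by the pointwise rank of $\alpha+\beta$ and combine Propositions \ref{th:rank3} and \ref{th:norot}. Since $G$ is either $SU(2)$ or $SO(3)$ so that $\mathfrak{g}_P$ has fibre dimension three, any solution with $(\alpha,\beta)\neq 0$ has rank one, two, or three, hence
\[
\mathcal{M}^{*,0}(f,\tau,\theta)=\mathcal{M}^{*,0}_{=3}(f,\tau,\theta)\sqcup \mathcal{M}^{*,0}_{\leq 2}(f,\tau,\theta)
\]
according as $\alpha+\beta$ attains rank three or not. I will build the exceptional set as a union $\mathcal{P}'=\mathcal{P}'_3\cup\mathcal{P}'_{\leq 2}$, with each summand controlling one stratum.

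For the lower-rank stratum I apply Proposition \ref{th:norot} directly. Let $\Pi_2\colon\mathcal{P}\to\mathcal{P}_2$ be the continuous projection that forgets the $\tau$-factors (a split Banach-space surjection). Proposition \ref{th:norot} yields a first category $\mathcal{P}_2'\subset\mathcal{P}_2$ outside of which no rank-one or rank-two solutions to \eqref{eq:p2VW1_i}--\eqref{eq:p2VW2_i} exist; setting $\mathcal{P}'_{\leq 2}:=\Pi_2^{-1}(\mathcal{P}_2')$ gives a first category subset of $\mathcal{P}$ (the preimage under $\Pi_2$ of a closed nowhere-dense set is closed with empty interior in the product), and $\mathcal{M}^{*,0}_{\leq 2}(f,\tau,\theta)=\emptyset$ whenever $(f,\tau,\theta)\in\mathcal{P}\setminus\mathcal{P}'_{\leq 2}$.

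For the top-rank stratum I need an $f$-dependent version of Proposition \ref{th:rank3}, since that proposition is stated for the equations \eqref{eq:pVW1_i}--\eqref{eq:pVW2_i} (i.e.\ $f=\mathrm{id}$) whereas the moduli space here is defined through \eqref{eq:p2VW1_i}--\eqref{eq:p2VW2_i}. My plan is to rerun the Freed--Uhlenbeck / Sard--Smale argument underlying Proposition \ref{th:rank3} with the universal moduli space parametrised over all of $\mathcal{P}$ and with the operators replaced by their $f$-dependent analogues $\bar{\partial}_{A,f},\,P^{0,2}_f(F_A),\,P^{1,1}_f(F_A)$. The parameter $f$ enters as a bystander: transversality is proved by varying $\tau$ and $\theta$ only, and at a rank-three configuration the cokernel of the linearisation is surjected onto by the pointwise action of $\tau_1,\tau_2,\tau_3$ against $[\alpha,\beta],\,[\alpha,\alpha^*],\,[\beta,\beta^*]$ together with Clifford multiplication by $\theta$, exactly as in the $f=\mathrm{id}$ case. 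This produces a first category $\mathcal{P}'_3\subset\mathcal{P}$ outside of which $\mathcal{M}^{*,0}_{=3}(f,\tau,\theta)$ is a smooth zero-dimensional manifold.

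Finally, $\mathcal{P}':=\mathcal{P}'_3\cup\mathcal{P}'_{\leq 2}$ is first category as a union of two first category sets, and for $(f,\tau,\theta)\in\mathcal{P}\setminus\mathcal{P}'$ one has $\mathcal{M}^{*,0}(f,\tau,\theta)=\mathcal{M}^{*,0}_{=3}(f,\tau,\theta)$, a smooth manifold of dimension zero. The main obstacle I expect is the verification that Proposition \ref{th:rank3} extends with $f$ switched on: the local bump-function and unique-continuation steps used in Feehan's surjectivity argument must be checked to still provide enough freedom in the $\tau$ and $\theta$ variations when the Dirac-like operator is deformed to $\bar{\partial}_{A,f}$ and the almost complex structure becomes $f^*J$. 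This is a purely local computation at a point where $\alpha+\beta$ attains rank three, so it ought to go through, but it is the one place where anything could fail.
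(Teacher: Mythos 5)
Your proposal is correct and takes essentially the same route as the paper, which simply announces Theorem \ref{th:main} as a consequence of "combining" Propositions \ref{th:rank3} and \ref{th:norot} without spelling out the stratification or the parameter bookkeeping. You rightly flag the one genuine gap this combination leaves open — that Proposition \ref{th:rank3} is stated for the $f=\mathrm{id}$ equations \eqref{eq:pVW1_i}--\eqref{eq:pVW2_i} and must be rerun for \eqref{eq:p2VW1_i}--\eqref{eq:p2VW2_i} with $f$ as a spectator — and your resolution is sound: since $f \in C^r(GL(T^*X))$ is fibrewise invertible, the variation $\rho(f(\underline\theta))$ sweeps out the same pointwise range as $\rho(\underline\theta)$, so Lemmas \ref{lem:r3} and \ref{lem:rankb3} and the unique-continuation step apply verbatim.
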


Note that the $C^r$-perturbation parameter space
$\mathcal{P}$ and its first category subset in the above theorem can be replaced by
$C^{\infty}$-perturbation parameter space and its first category
subset by using an argument by Feehan--Leness $\cite[\S 5.1.2]{FL}$.

\vspace{0.1cm}

\paragraph{Acknowledgements.}
I would like to thank Kael Dixon and Alex Kinsella for helpful conversations to improve the presentation of this article.

\section{Perturbations}
\label{sec:pert}

We recall some descriptions of 
$Spin^{c}$-structures and the Dirac operators on compact symplectic
manifolds in Section \ref{sec:spin}. 
We then describe the perturbations to the equations on compact
symplectic four-manifolds in Sections \ref{sec:perteq} and \ref{sec:fpert}.

\subsection{Spinor bundles and the Dirac operator on symplectic manifolds}
\label{sec:spin}

A general reference for $Spin^{c}$-structures and the Dirac operators is
\cite{LM}.

\paragraph{Spinor bundles.}

A spinor bundle $S$ splits into the direct sum of vector
bundles $S^{+}$ and $S^{-}$, where $S^{+}, S^{-}$ are the eigenspaces of
the Clifford element of $\pm 1$ eigenvalues respectively. 
If $X$ is an oriented smooth four-manifold with $Spin^c$-structure, 
we have the following isomorphism induced from the Clifford multiplication: 
\begin{equation*}
T^{*} X \otimes \C \cong \text{\rm Hom}_{\C} (S^{+} ,
S^{-} ). 
\label{eq:isom}
\end{equation*}
See \cite{Mo} (or \cite[A.3]{F}) for a proof.  
If $X$ is an almost complex four-manifold, 
this isomorphism can be written as 
\begin{equation*}
T^{*} X \otimes \C \cong \text{\rm Hom}_{\C} (\Lambda^{0,0} \oplus 
\Lambda^{0,2} , \Lambda^{0,1}) . 
\label{eq:spin_isom}
\end{equation*}

\paragraph{The Dirac operator on symplectic manifolds.}

Let $E$ be a vector bundle on $X$. 
The Dirac operator $D_{A}$ associated to a connection $A$ on $E$ is
given by the  composition: 
\begin{equation*}
 \Gamma(S) \xrightarrow{\nabla_{A}} \Gamma(T^{*} X \otimes (S \otimes E)) 
\xrightarrow{metric} \Gamma(TX \otimes (S \otimes E)) \xrightarrow{\rho} \Gamma( S
\otimes E) , 
\end{equation*}
where $\rho$ is the Clifford multiplication map.

In the almost complex case, the Dirac
operator is  written as 
\begin{equation*}
 D_A = \sqrt{2} ( \bar{\partial}_{A} + \bar{\partial}_{A}^{*}),
\end{equation*}
where $A$ is a connection on $E$. 
Thus, if the underlying manifold $X$ is a symplectic four-manifold,
the Dirac equations become 
$ \bar{\partial}_{A} \alpha + \bar{\partial}_{A}^{*} \beta = 0$, 
where $\alpha \in
\Omega^{0, 0} (E)$, $\beta \in \Omega^{0,2} (E)$.

\subsection{The equations on symplectic four-manifolds and a perturbation}
\label{sec:perteq}

Let $X$ be a compact symplectic four-manifold with symplectic form
$\omega$, and let $P$ be a principal $G$-bundle over $X$, where $G$ is a
compact Lie group. 
We take an almost complex structure $J$ compatible with 
the symplectic form $\omega$.

Let us rewrite the equations \eqref{VW1} and \eqref{VW2}, 
when the underlying manifold is a compact symplectic four-manifold.  
This was thoroughly described by Mares \cite[\S 7]{BM}. 
We follow his notations. 
First we denote an orthonormal frame of $\Lambda^1$ by 
$\{ e^{0} , e^{1}, e^{2} , e^{3} \}$. 
We write
$d z^{1} = e^{0} + i e^{1}, \, d z^{2} = e^{2} + i e^{3}$. 
Note that we have $\omega = e^{0} \wedge e^{1}  + e^{2} \wedge e^{3}$. 
We write $B \in \Omega^{+} (\mathfrak{g}_{P})$ as 
$B = B_{1} (e^{0} \wedge e^{1} + e^{2} \wedge e^{3}) 
+ B_{2} (e^{0} \wedge e^{3} + e^{3} \wedge e^{1}) 
 +B_{3} ( e^{0} \wedge e^{3}+ e^{1} \wedge e^{2}) $. 
We then define $\alpha \in \Omega^{0,0} (X , \mathfrak{g}_{P})$ and 
$\beta \in \Omega^{0,2} (X , \mathfrak{g}_{P})$ by 
\begin{equation*}
\alpha := \Gamma + i B_{1} , \, \beta := - \frac{1}{2} (B_{2} + i B_{3}) 
 d \bar{z}^{1} \wedge d \bar{z}^{2} . 
\end{equation*} 
Note that $B$ can be written as $B = B_{1} \omega + \beta + \beta^{*}$. 
Note also that $\alpha - \bar{\alpha} =0$ if $A$ is irreducible, since $\Gamma =0$ in
this case.

With these notations, the equations \eqref{VW1} and \eqref{VW2} are
rewritten as follows. 
\begin{gather}
 \bar{\partial}_{A} \alpha + \bar{\partial}_{A}^{*} \beta = 0 , 
\label{VWS1} \\
F_{A}^{0,2} + \frac{1}{2} [ \alpha , \beta ] = 0 , 
\, \, 
\omega^2 \wedge \left( i \Lambda  F_{A}^{1,1}  + \frac{1}{2} [\alpha , \alpha^{*} ] 
 \right) + [ \beta , \beta^{*} ] = 0 ,   
\label{VWS2}
\end{gather} 
where $\Lambda := (\wedge \omega)^{*}$.

\paragraph{Perturbation.}

We consider the following perturbed Vafa--Witten equations: 
\begin{equation}
\bar{\partial}_{A} \alpha + \bar{\partial}_{A}^{*} \beta 
+ \rho (\theta) (\alpha + \beta ) = 0 , 
\label{eq:pVW1}
\end{equation}
\begin{equation}
F_{A}^{0,2} + \frac{1}{2} \tau_{1} [ \alpha , \beta ] = 0 , 
\, \, 
\omega^2 \wedge \left( i \Lambda F_{A}^{1,1} + \frac{1}{2} \tau_{2} [
		 \alpha , \alpha^{*} ] 
\right)+ \tau_{3} [ \beta , \beta^{*} ] = 0 , 
\label{eq:pVW2}
\end{equation}
where 
$\rho : T^{*} X \otimes \C \to \text{Hom}_{\C} (\Lambda^{0,0} \oplus
\Lambda^{0,2} , \Lambda^{0,1})$ is the Clifford multiplication,  
$\tau_{1} \in C^{r} (GL (\Lambda^{0,2}))$, 
$\tau_{2} \in C^{r} (GL (\Lambda^{0,0}))$, 
$\tau_{3} \in C^{r} (GL (\Lambda^{2,2}))$ 
and $\theta \in T^{*}X \otimes \C$ are perturbation parameters.

Note that this perturbation does not involve
connections. 
In Section \ref{sec:rankthree}, we prove that the moduli space of
solutions to the above equations \eqref{eq:pVW1} and \eqref{eq:pVW2} with
$A$ irreducible, $\alpha - \bar{\alpha} =0$ and $\alpha + \beta$ being of rank three is a smooth
manifold of dimension zero for a generic choice of the perturbation
parameters.

\subsection{Further perturbation}
\label{sec:fpert}

Following Feehan \cite[\S 3]{F}, we consider a perturbation of the
Dirac operator. 
We take $f \in C^{r} (GL (T^{*} X))$, 
and consider the following: 
\begin{equation*}
\bar{\partial}_{A,f} 
 := \sum f (v^{i} ) \wedge \nabla_{A,v_{i}},
 \quad 
\bar{\partial}_{A,f}^{*} 
 := - \sum \iota ( f ( v^{i} ) ) \nabla_{A, v_{i}}, 
\end{equation*}
where $\{ v^{i} \}$ is an orthonormal frame of $\Lambda^{0,1}$ and $\{
v_{i} \}$ is its dual. 
These $\bar{\partial}_{A,f}$ and $\bar{\partial}_{A,f}^{*}$ can be seen
as a variation of the Dirac operator corresponding to moving metrics or
almost complex structures of the
underlying manifold.

We then consider the following equations: 
\begin{equation}
\bar{\partial}_{A,f} \alpha + \bar{\partial}_{A,f}^{*} \beta 
+ \rho (f( \theta ) ) (\alpha + \beta ) = 0 , 
\label{eq:p2VW1}
\end{equation}
\begin{equation}
P^{0,2}_{f} ( F_{A} ) + \frac{1}{2} \tau_{1} [ \alpha , \beta ] = 0 , 
\, \, 
\omega^2 \wedge \left( i \Lambda P^{1,1}_{f} ( F_{A} )  
+ \frac{1}{2} \tau_{2} [ \alpha , \alpha^{*} ]  \right)+ \tau_{3} [ \beta , \beta^{*} ] = 0 , 
\label{eq:p2VW2}
\end{equation}
where $\theta \in T^{*}X \otimes \C $, 
$P^{0,2}_{f}$ and $P^{1,1}_{f}$ are the projections to $(0,2)$ and
$(1,1)$-parts with respect to the almost complex structure $f^{*}J$. 
We denote the left hand side of \eqref{eq:p2VW1} by 
$\left( \bar{\partial}_{A, (f,\theta)} +
\bar{\partial}^{*}_{A,(f,\theta)} \right) 
(\alpha + \beta)$.

As in \cite[Lem.~3.2]{F}, 
the differential of the above perturbed Dirac operator 
$\left( 
\bar{\partial}_{A,(f,\theta)} + \bar{\partial}^{*}_{A,(f,\theta)} \right)$ is given by 
\begin{equation*}
\begin{split}
D \left( \bar{\partial}_{A,(f,\theta)} +
 \bar{\partial}^{*}_{A,(f,\theta)} \right)_{(A,(f,\theta))} 
& ( a , \underline{f}, \underline{\theta}) 
(\mathfrak{a} + \mathfrak{b}) \\ 
&= \sum \underline{f} (v^{i} ) \wedge \nabla_{A , v_{i}}  \mathfrak{a} 
 - \sum \iota ( \underline{f} ( v^{i} ))  \nabla_{A, v_{i}}  \mathfrak{b}  \\
&\qquad \qquad  
 + \rho (f (a)) (\mathfrak{a} + \mathfrak{b} ) 
 + \rho (f (\underline{\theta})) ( \mathfrak{a} + \mathfrak{b} ), \\
\end{split}
\label{eq:diffpD}
\end{equation*} 
where $a \in \Omega^{1} (\mathfrak{g}_{P}) , \underline{f} \in C^{r} 
( \mathfrak{gl} (T^{*}X))$, 
$\underline{\theta} \in C^{r} (\Lambda^{1} \otimes \C) $ and $\mathfrak{a} \in \Omega^{0} (\mathfrak{g}_{P}) , 
\mathfrak{b} \in \Omega^{0,2} (\mathfrak{g}_{P})$.

In Section \ref{sec:rankletwo}, 
we prove that there are no rank one or two solutions $(A, (\alpha ,
\beta))$ 
to the equations
\eqref{eq:p2VW1} and \eqref{eq:p2VW2} with $A$ irreducible, 
$\alpha - \bar{\alpha} =0$ and $( \alpha , 
\beta) \neq 0$ for a generic choice of perturbation parameters.

\section{Generic smoothness for the rank three case}
\label{sec:rankthree}

In this section, 
we prove Proposition \ref{th:rank3}. 
In order to do that we consider  
the {\it parametrized moduli space}, and prove that it is a smooth
manifold (Proposition \ref{prop:rkth}). 
Then Proposition \ref{th:rank3} follows from Proposition
\ref{prop:rkth}.

\subsection{Parametrized moduli space}
\label{sec:univmod}

Let $X$ be a compact symplectic four-manifold with symplectic form
$\omega$, and let $P$ be a principal $G$-bundle over $X$. 
From now on, $G$ is either $SU(2)$ or $SO(3)$. 
We take an almost complex structure $J$ compatible with 
the symplectic form $\omega$.

We denote by $\mathcal{A}^{2}_{k} (P)$ 
the $L^{2}_{k}$-completion of the space of connections on $P$, 
and by $\mathcal{G} (P) = \mathcal{G}^{2}_{k+1} (P)$ 
the $L^{2}_{k+1}$-completion of the gauge group. 
We set 
$$ \mathcal{C}(P) 
:= \mathcal{A}^{2}_{k} (P) 
\times L^{2}_{k} (\mathfrak{g}_{P} \otimes 
( \Lambda^{0,0} \oplus \Lambda^{0,2} )) , $$
and 
$ \mathcal{P}_1 
:= 
C^{r} (GL ( \Lambda^{0,2})) 
\times C^{r} (GL (\Lambda^{0,0}))
\times C^{r} (GL ( \Lambda^{2,2})) \times 
 C^{r} (\Lambda^1 \otimes \C) $. 
This $\mathcal{P}_1$ is the parameter space for the perturbation described in Section \ref{sec:perteq}.  
We denote the quotient $\mathcal{C}(P) / \mathcal{G}(P)$ by $\mathcal{B}
(P)$.

We define 
$$ s : \mathcal{C} (P) \times \mathcal{P}_1 \to 
L^{2}_{k-1} \left( \mathfrak{g}_{P} \otimes \Lambda^{0,1}  \right) 
\times 
 L^{2}_{k-1} \left( \mathfrak{g}_{P} \otimes ( \Lambda^{0,2} \oplus \Lambda^{
1,1 } ) \right) 
 $$
by $
s \left(  A , (\alpha , \beta) ,  \tau , \theta  \right) 
:= ( s_{1} ( A , (\alpha , \beta) ,  \tau , \theta) , s_{2} ( A ,
(\alpha , \beta) ,  \tau , \theta) )$, where 
\begin{equation*}
\begin{split}
s_{1} ( A , (\alpha , \beta) ,  \tau , \theta) 
&:= \bar{\partial}_{A} \alpha  
 + \bar{\partial}_{A}^{*} \beta 
+ \rho (\theta) (\alpha + \beta )  , \\ 
s_{2} ( A , (\alpha , \beta) ,  \tau , \theta) &:= 
F_{A}^{0,2} + \frac{1}{2} \tau_{1} [ \alpha , \beta ]  
+ \Lambda F_{A}^{1,1} \wedge \omega \\ 
 &\qquad \qquad \qquad + \frac{1}{2} \tau_{2} [\alpha , \alpha^{*} ] \wedge
 \omega  + \Lambda \tau_{3} [ \beta , \beta^{*} ] .
\end{split}
\end{equation*}
This is a $\mathcal{G}(P)$-equivariant map, where the action of $\mathcal{G} (P)$ on $\mathcal{P}_1$ is taken to be trivial. 
Here $\rho : T^{*} X \otimes \C \to \text{Hom}_{\C} 
(\Lambda^{0,0} \otimes
\Lambda^{0,2} , \Lambda^{0,1})$ is the Clifford multiplication map, 
and $\tau := ( \tau_1 , \tau_2 , \tau_3 ) \in \mathcal{P}_1$. 
We say $M (P) := s^{-1} (0) / \mathcal{G} (P) \subset
\mathcal{B} (P) \times \mathcal{P}_1 $ 
the {\it parametrized moduli space}.

We denote by $\mathcal{B}^{*}_{\diamond} (P)$ gauge equivalence classes of
pairs $(A , (\alpha , \beta)) \in 
\mathcal{C} (P)$
with $A$ irreducible, $\alpha - \bar{\alpha} =0$ and $ \alpha + \beta
$ being of rank three. 
We set $M^{*}_{\diamond} (P) := M (P)\cap 
 ( \mathcal{B}^{*}_{\diamond} (P) \times \mathcal{P}_1)$. 
We then have the following:

\begin{proposition}
The zero set $s^{-1} (0)$ in $\mathcal{B}^{*}_{\diamond} (P) \times \mathcal{P}_1$ is
 regular, in particular,  the parametrized  
moduli space $M^{*}_{\diamond} (P)$ is a smooth Banach
 submanifold of $\mathcal{B}^{*}_{\diamond} (P) \times \mathcal{P}_1$. 
\label{prop:rkth}
\end{proposition}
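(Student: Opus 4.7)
The plan is to verify the hypotheses of the Banach-space implicit function theorem on the irreducible, rank-three stratum: namely, that $s$ is a smooth $\mathcal{G}(P)$-equivariant map between suitable Banach manifolds, and that its linearization $Ds$ is a surjective bounded linear map with split kernel at every point of $s^{-1}(0) \cap (\mathcal{B}^*_\diamond(P) \times \mathcal{P}_1)$. Smoothness of $s$ follows from standard Sobolev multiplication estimates (for the quadratic brackets $[\alpha,\beta]$, $[\alpha,\alpha^*]$, $[\beta,\beta^*]$) together with the assumption that the regularity $r$ of the $C^r$-parameters is large relative to the Sobolev exponent $k$; the descent to $\mathcal{B}^*_\diamond(P) \times \mathcal{P}_1$ uses the Coulomb slice theorem at irreducible connections. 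The task then reduces to proving surjectivity of $Ds$ modulo gauge.

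Write $Ds = L + M$, where $L$ is the derivative in the $(A,\alpha,\beta)$-directions and $M$ in the parameter directions $(\tau,\theta)$. Coupled with the gauge-fixing equation $d_A^* a = 0$, the operator $L$ is elliptic of first order, hence Fredholm, so $\operatorname{Image}(L)$ is closed with finite-dimensional cokernel; consequently $\operatorname{Image}(Ds)$ is closed, with a topological complement, provided any $(\eta_1,\eta_2)$ that is $L^2$-orthogonal to $\operatorname{Image}(L)$ and to $\operatorname{Image}(M)$ must vanish. Now $L^*(\eta_1,\eta_2) = 0$ is an elliptic system, so by elliptic regularity $(\eta_1,\eta_2)$ is smooth, and in particular test-perturbations by $C^\infty_c$ elements of $\mathcal{P}_1$ suffice to probe its vanishing.

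To conclude $\eta_1 \equiv 0$, I would pick a point $x_0 \in X$ at which $\alpha + \beta$ achieves rank three, and a neighborhood $U$ of $x_0$ on which the rank remains three. Because $\rho: T^*X \otimes \C \to \operatorname{Hom}_\C(\Lambda^{0,0} \oplus \Lambda^{0,2}, \Lambda^{0,1})$ is a fibrewise isomorphism, composition with the rank-three section $(\alpha+\beta)(x)$ yields a pointwise surjective bundle map $\Lambda^1_x \otimes \C \twoheadrightarrow (\mathfrak{g}_P \otimes \Lambda^{0,1})_x$ for $x \in U$; testing against $\underline{\theta} \in C^\infty_c(U, \Lambda^1 \otimes \C)$ then forces $\eta_1|_U = 0$. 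Analogously, variations of $\tau_1, \tau_2, \tau_3$ supported in $U$ produce the terms $\underline{\tau}_1[\alpha,\beta]$, $\underline{\tau}_2[\alpha,\alpha^*] \wedge \omega$, $\Lambda\underline{\tau}_3[\beta,\beta^*]$; the rank-three condition, combined with the structure of $\mathfrak{su}(2)$ (which forces these three brackets to span $\mathfrak{g}_P$ at each point of $U$), yields fibrewise surjectivity onto the $\Lambda^{0,2} \oplus \Lambda^{1,1}$-valued target, and hence $\eta_2|_U = 0$. Finally, Aronszajn's unique continuation theorem applied to the elliptic adjoint system $L^*(\eta_1,\eta_2) = 0$ propagates the vanishing from $U$ to all of $X$, proving surjectivity of $Ds$ and hence the submanifold property via the implicit function theorem.

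The main obstacle is the algebraic step showing that, under the rank-three hypothesis, the three brackets simultaneously span the fibres of $\mathfrak{g}_P \otimes \Lambda^{0,2}$ and $\mathfrak{g}_P \otimes \Lambda^{1,1}$ on an open subset, which requires an explicit computation in $\mathfrak{su}(2)$ (and its adaptation to $\mathfrak{so}(3)$) using the interplay between the Clifford action, the condition $\alpha - \bar\alpha = 0$, and the Lie bracket. A subsidiary point is ensuring the cokernel element $(\eta_1,\eta_2)$ is regular enough to apply Aronszajn; this is handled by the bootstrap from $L^*(\eta_1,\eta_2) = 0$, once $r \geq k$ is imposed.
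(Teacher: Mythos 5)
Your overall architecture — test the $L^2$-orthogonal cokernel element against variations of the perturbation parameters $(\tau,\theta)$, derive vanishing on the open set where $\alpha+\beta$ has rank three, and finish with unique continuation for an elliptic system — is exactly the paper's strategy. But the two pointwise algebraic steps as you have stated them would fail, and the replacements are precisely the nontrivial content of the paper's proof.

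First, the map $\underline{\theta}_x \mapsto \rho(\underline{\theta}_x)(\alpha+\beta)_x$ from $(T^*X)_x\otimes\C$ (real dimension $8$) to $(\mathfrak{g}_P\otimes\Lambda^{0,1})_x$ (real dimension $12$) cannot be surjective, so testing against $\underline{\theta}$ does not immediately force $\eta_1|_U=0$. What the paper actually proves (Lemma \ref{lem:r3}, via Feehan's linear-algebra Lemma \ref{prop:F1}) is the weaker but sufficient statement that $\alpha+\beta$ and $\delta$ have \emph{orthogonal ranges} in $\mathfrak{g}_P$, so $\operatorname{rank}_{\R}(\alpha+\beta)(x)+\operatorname{rank}_{\R}\delta(x)\leq 3 = \dim\mathfrak{g}_P$. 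It is only this rank inequality, not surjectivity, that makes the rank-three hypothesis do its work. Second, the claim that the three brackets must "span the fibres" of $\mathfrak{g}_P\otimes(\Lambda^{0,2}\oplus\Lambda^{1,1})$ is likewise too strong and is not what is used. The paper again invokes the same orthogonality lemma (Lemma \ref{lem:rankb3}) to get a rank inequality, and then the one genuinely nontrivial algebraic input is Mares's Lemma \ref{lem:BM}: the combined bracket expression $\tfrac{1}{2}[\alpha,\beta]+\tfrac{1}{2}[\alpha,\alpha^*]\wedge\omega+\Lambda[\beta,\beta^*]$ has rank three if and only if $\alpha+\beta$ does. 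You correctly sense that "some explicit computation in $\mathfrak{su}(2)$" is required here, but the target of that computation is the Mares rank-preservation statement, not a spanning statement (which would again be dimensionally impossible). Without replacing "surjectivity/spanning" by the orthogonality-of-ranges lemma and the Mares lemma, the pointwise step of your argument does not close.
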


We prove Proposition \ref{prop:rkth} in Section \ref{sec:rkth}. 
Proposition \ref{th:rank3} follows from Proposition \ref{prop:rkth} as
described below.

\vspace{0.2cm}

\noindent
{\it Proof of Proposition \ref{th:rank3}.}
Note that  $s$ is a Fredholm section if it is restricted to
$\mathcal{B} (P) \times \{ (\tau , \theta )\}$ for a perturbation
parameter $(\tau , \theta)$.  
Thus, by the Sard--Smale theorem (\cite[Prop~4.3.11]{DK}), 
there exists a first category subset
$\mathcal{P}_1'$ such that the zero set of $s$ in
$\mathcal{B}^{*}_{\diamond} (P)$ is 
regular for $(\tau , \theta ) \in \mathcal{P}_1 \setminus \mathcal{P}_1'$. 
Hence,  
$\mathcal{M}^{*}_{\diamond} 
(\tau, \theta) =  s^{-1} (0) \cap \mathcal{B}^{*}_{\diamond} (P)$ 
is a smooth
manifold for generic $C^{r}$-parameters $(\tau, \theta)$. 
\qed

\subsection{Proof of Proposition \ref{prop:rkth}}
\label{sec:rkth}

In this section, we prove Proposition \ref{prop:rkth}. 
We follow an argument by Feehan \cite[\S 2.2]{F} (see also \cite[\S 4.3.5]{DK}). 
First we consider the linearisation $D s= (D s_{1} , D s_{2}) 
:  
L^{2}_{k} (\mathfrak{g}_{P} \otimes \Lambda^{0,1}) \times 
L^{2}_{k} 
(\mathfrak{g}_{P} \otimes ( \Lambda^{0,0} \oplus \Lambda^{0,2})) 
\times  \mathcal{P}_1 
\to L^{2}_{k-1} (\mathfrak{g}_{P} \otimes \Lambda^{0,1}) \times L^{2}_{k-1} 
(\mathfrak{g}_{P} \otimes ( \Lambda^{1,1}  \oplus \Lambda^{0,2} ))$ of $s$ 
at $(A , (\alpha, \beta), \tau, \theta ) \in s^{-1} (0)$, where 
\begin{equation*}
\begin{split}
D s_1 
( (\underline{\tau} , \underline{\theta}), a , ( \mathfrak{a},
 \mathfrak{b})) 
&= \bar{\partial}_{A}  \mathfrak{a} + \bar{\partial}_{A}^{*}
 \mathfrak{b} 
+ \rho (\theta) (  \mathfrak{a} +  \mathfrak{b} )
 + \rho(\underline{\theta}) ( \alpha + \beta ) ,\\
D s_{2} ( (\underline{\tau} , \underline{\theta}), a , ( \mathfrak{a} ,
  \mathfrak{b} )) 
&= \bar{\partial}_A a + \partial_A a 
+ \frac{1}{2} \tau_{1} \left( [  \mathfrak{a} , \beta ]  
    + [ \alpha ,  \mathfrak{b} ] \right)  
  + \frac{1}{2} \underline{\tau_1} \tau_{1} [\alpha , \beta] \\ 
 & \qquad +  \frac{1}{2} \tau_2
 \left( [ \alpha ,  \mathfrak{a}^{*}] + [ \mathfrak{a} , \alpha^{*} ] \right)
 \wedge \omega   
 + \frac{1}{2} \underline{\tau_2} \tau_2 \left( [ \alpha , \alpha^{*} ] \right)
 \wedge  \omega \\   
 & \qquad \qquad 
   + \Lambda \tau_3 \left( [ \beta ,  \mathfrak{b}^{*} ] 
   +  [  \mathfrak{b} , \beta^{*} ] \right)  
   + \Lambda \underline{\tau_3} \tau_3 \left( [ \beta , \beta^{*} ] \right). \\
\end{split}
\end{equation*}

We then suppose for a contradiction that there exists $(\delta , v) 
\in C^{0} ( \mathfrak{g}_{P} \otimes \Lambda^{0,1}) \times 
 C^{0} ( \mathfrak{g}_{P} \otimes ( \Lambda^{1,1} \oplus \Lambda^{0,2}))
 $ with $(\delta , v) \neq 0$ 
such that 
\begin{equation}
\left\langle 
D s_{1} 
 (  
 a , (  \mathfrak{a} ,  \mathfrak{b}) ,\underline{\tau} , \underline{\theta} )  , 
\delta \right\rangle_{L^2} 
 = 0, 
\quad 
\left\langle 
D s_{2} 
 (  a , (  \mathfrak{a} ,  \mathfrak{b}), \underline{\tau} , \underline{\theta} )  , 
 v \right\rangle_{L^2} = 0 . 
\label{eq:ds12de}
\end{equation}
By setting $ (  \mathfrak{a} ,  \mathfrak{b}
)= 0$ in the first equation of \eqref{eq:ds12de}, 
we get 
\begin{equation}
 \left\langle \rho ( \underline{\theta} ) (\alpha + \beta ) , 
\delta \right\rangle_{L^2} = 0 
\label{eq:1} 
\end{equation}
for $\underline{\theta} \in C^{r} ( \Lambda^{1} \otimes \C)$.

\begin{lemma}
Assume that $ \alpha + \beta \in C^{0} ( 
\mathfrak{g}_{P} \otimes ( \Lambda^{0,0} \oplus 
\Lambda^{0,2}) )$ and 
$\delta \in C^{0} ( \mathfrak{g}_{P} \otimes \Lambda^{0,1} )$ satisfy \eqref{eq:1}. 
Then $\alpha + \beta $ and $\delta $ have orthogonal images in
 $\mathfrak{g}_{P}$ at each point of $X$, in particular,  
$$ \text{\rm rank}_{\R}\, (\alpha + \beta ) (x) + \text{\rm rank}_{\R}\, 
\delta (x) \leq 3 $$
at each point $x \in X$. 
\label{lem:r3}
\end{lemma}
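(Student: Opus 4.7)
The plan is to reduce the integrated identity~(1) to a pointwise algebraic condition at each $x \in X$ and then exploit the fact that $\rho$ is an isomorphism $T_{x}^{*} X \otimes \C \stackrel{\sim}{\to} \text{Hom}_{\C}(\Lambda^{0,0}_{x} \oplus \Lambda^{0,2}_{x},\ \Lambda^{0,1}_{x})$, recalled in Section~\ref{sec:spin}, to extract orthogonality of the images in $\mathfrak{g}_{P}$.

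First, I would localize. Since $\underline{\theta}$ ranges over every $C^{r}$ complex $1$-form, for any $x \in X$ and any prescribed value $\theta_{0} \in T_{x}^{*} X \otimes \C$ one can produce a sequence of test forms concentrated in balls of radius $\varepsilon \to 0$ around $x$ with pointwise value $\theta_{0}$ at $x$. Feeding these into~(1) and using continuity of $\alpha + \beta$ and $\delta$ together with the standard fundamental-lemma-of-calculus-of-variations argument yields
\[
 \bigl\langle \rho(\theta_{0})\bigl((\alpha + \beta)(x)\bigr),\ \delta(x) \bigr\rangle_{\mathfrak{g}_{P} \otimes \Lambda^{0,1}} = 0
\]
for every $x \in X$ and every $\theta_{0} \in T_{x}^{*} X \otimes \C$.

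Next, I fix a point $x$ and work algebraically. Choose an orthonormal frame $\{g_{\lambda}\}_{\lambda=1}^{3}$ of $\mathfrak{g}_{P}$ and write $(\alpha + \beta)(x) = \sum_{\lambda} g_{\lambda} \otimes v_{\lambda}$ and $\delta(x) = \sum_{\lambda} g_{\lambda} \otimes w_{\lambda}$ with $v_{\lambda} \in V_{x} := \Lambda^{0,0}_{x} \oplus \Lambda^{0,2}_{x}$ and $w_{\lambda} \in W_{x} := \Lambda^{0,1}_{x}$. Because $\rho$ is a $\C$-linear isomorphism, as $\theta_{0}$ varies the operator $\phi := \rho(\theta_{0})$ sweeps out every element of $\text{Hom}_{\C}(V_{x}, W_{x})$, so the pointwise identity rewrites as
\[
 \sum_{\lambda}\bigl\langle \phi(v_{\lambda}),\ w_{\lambda}\bigr\rangle = 0 \qquad \text{for every } \phi \in \text{Hom}_{\C}(V_{x}, W_{x}).
\]
Testing with the rank-one maps $\phi = \xi \otimes \psi$ where $\xi \in V_{x}^{*}$ and $\psi \in W_{x}$ are arbitrary, and varying $\xi, \psi$ independently, yields the scalar family of identities $\sum_{\lambda} \xi(v_{\lambda})\langle \psi, w_{\lambda}\rangle = 0$. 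Packaging these intrinsically, one reads off that for every pair of indices $\lambda, \mu$ for which $v_{\lambda}$ contributes nontrivially in $V_{x}$ and $w_{\mu}$ contributes nontrivially in $W_{x}$, the corresponding $\mathfrak{g}_{P}$-vectors satisfy $\langle g_{\lambda}, g_{\mu}\rangle_{\mathfrak{g}_{P}} = 0$; equivalently, $\text{Im}((\alpha + \beta)(x))$ and $\text{Im}(\delta(x))$ are orthogonal subspaces of $\mathfrak{g}_{P}$.

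Finally, since $\mathfrak{g}_{P}$ has real dimension $3$ for $G \in \{SU(2), SO(3)\}$, orthogonality of the two images in $\mathfrak{g}_{P}$ immediately forces the sum of their dimensions to be at most~$3$, which is the advertised rank inequality. The main obstacle is the middle step: because the index $\lambda$ runs over three values while $V_{x}$ and $W_{x}$ each have only complex dimension~$2$, the vectors $\{v_{\lambda}\}$ (respectively $\{w_{\lambda}\}$) cannot be $\C$-linearly independent, so the $\xi \otimes \psi$ test maps must be selected carefully—using the full surjectivity of $\rho$ onto $\text{Hom}_{\C}(V_{x}, W_{x})$ and the independence of $\xi$ and $\psi$—in order to disentangle cleanly the $\mathfrak{g}_{P}$-inner products from the pairings in $V_{x}^{*}$ and $W_{x}$. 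Once that separation is achieved, the orthogonality of the images in $\mathfrak{g}_{P}$ follows and a dimension count completes the proof.
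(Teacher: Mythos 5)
Your first step---converting the integrated identity~\eqref{eq:1} into the pointwise identity
$\bigl\langle \rho(\underline{\theta}_{x})(\alpha+\beta)(x),\, \delta(x)\bigr\rangle_{x}=0$
for all $\underline{\theta}_{x}\in (T^{*}X)_{x}\otimes\C$, and then reinterpreting this (via the isomorphism $\rho$) as saying
$\sum_{\lambda}\langle \phi(v_{\lambda}),w_{\lambda}\rangle=0$ for every $\phi\in\mathrm{Hom}_{\C}(V_{x},W_{x})$---is exactly what the paper does, and is fine.

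The gap is precisely where you flag it yourself: the algebraic deduction from that family of identities to ``$\mathrm{Im}((\alpha+\beta)(x))\perp\mathrm{Im}(\delta(x))$ in $\mathfrak{g}_{P}$'' is not carried out, and the intermediate claim you do write down is wrong. You choose $\{g_{\lambda}\}$ to be an \emph{orthonormal} frame of $\mathfrak{g}_{P}$ and then assert that one ``reads off'' $\langle g_{\lambda},g_{\mu}\rangle_{\mathfrak{g}_{P}}=0$ whenever $v_{\lambda}\neq 0$ and $w_{\mu}\neq 0$. But $\langle g_{\lambda},g_{\mu}\rangle=\delta_{\lambda\mu}$ by construction, so this statement is vacuous for $\lambda\neq\mu$ and false for $\lambda=\mu$; moreover, even if it were meaningful it would not say that the two image subspaces are orthogonal, since $\mathrm{Im}((\alpha+\beta)(x))$ and $\mathrm{Im}(\delta(x))$ are not simply spans of the $g_{\lambda}$'s with nonvanishing coefficient vectors --- they are determined by the $\R$-linear span relations among the $v_{\lambda}$'s (resp.\ $w_{\lambda}$'s) in $V_{x}$ (resp.\ $W_{x}$). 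You then concede the point explicitly (``the main obstacle is the middle step \dots must be selected carefully \dots Once that separation is achieved \dots''), which confirms that the needed algebraic lemma is assumed rather than proved. This is precisely the content of Lemma~\ref{prop:F1} (Feehan's Lemma~2.3), and the paper's proof consists of nothing more than reducing to the pointwise identity and then citing that lemma. To repair your write-up, you should either cite Lemma~\ref{prop:F1} directly, as the paper does, or supply an actual proof of the linear-algebra statement: if $M\in U^{*}\otimes_{\R}W$, $N\in V^{*}\otimes_{\R}W$ with $\dim_{\C}U\leq\dim_{\C}V$, and $\langle MP,N\rangle=0$ for every $P\in\mathrm{Hom}_{\C}(V,U)$, then $\mathrm{Ran}\,M\perp\mathrm{Ran}\,N$ in $W$. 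Note that this is \emph{not} immediate from testing with rank-one $P$'s, because the index $\lambda$ runs over $\dim_{\R}\mathfrak{g}_{P}=3$ values while $\dim_{\C}V_{x}=\dim_{\C}W_{x}=2$, so the $v_{\lambda}$'s and $w_{\lambda}$'s are necessarily $\C$-linearly dependent --- the obstacle you identified but did not resolve.
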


\begin{proof}
In \eqref{eq:1}, $\underline{\theta} \in C^{r} (\Lambda^1 \otimes \C)$ 
is arbitrary, thus, we get the point-wise identity 
$$  \left\langle \rho ( \underline{\theta}_{x} ) 
(\alpha + \beta ) (x),  \delta  (x) \right\rangle_{x} = 0 $$
for all $\underline{\theta}_{x} \in (T^{*} X)_{x} \otimes \C$.

We then recall the following.

\begin{lemma}[\cite{F}, Lem.~2.3]
Let $U$ and $V$ be complex vector spaces with 
$\dim U \leq \dim V$, and let $W$ be a real vector
 space.  
We take $M \in  U^{*} \otimes_{\R} W$ 
and $N \in V^{*} \otimes_{\R} W$. 
Then, if $ \langle M P , N \rangle_{V^{*} \otimes_{\R} W} =0$ for all $P \in
 \text{\rm Hom}_{\C}\, (V ,U)$, 
we get $\text{\rm Ran} \, M \perp \text{\rm Ran} \, N$ in $W$, in
 particular, 
$ \text{\rm rank}_{\R}\, M + \text{\rm rank}_{\R}\, N \leq \dim_{\R} W $.
\label{prop:F1}
\end{lemma}

Since $\rho$ gives a complex linear isomorphism
$$ (T^{*} X)_{x} \otimes_{\R} \C \to 
\text{Hom}_{\C} (\Lambda^{0,1} \oplus \Lambda^{0,2} , \Lambda^{0,1})_{x} ,$$
we can invoke Lemma \ref{prop:F1} to obtain the assertion. 
\end{proof}

As $(A , (\alpha , \beta)) \in \mathcal{C}^{*}_{\diamond} (P)$ 
and $\alpha + \beta$ is $C^r$ for some $r$, 
there is a non-empty open subset $U \subset X$ on which 
$\text{rank}_{\R} \, (\alpha + \beta ) (x)  = 3$ for all $x \in U$. 
Then Lemma \ref{lem:r3} implies that $\text{rank}\, \delta (x) = 0 $ for
all $x \in U$, namely, $\delta \equiv 0$ on $U$.

In a similar way, by setting $(  a , (  \mathfrak{a} ,
 \mathfrak{b} )) = 0$ in the second equation of \eqref{eq:ds12de}, we get  
\begin{equation}
  \left\langle 
\frac{1}{2}  \underline{\tau_1} \tau_1 [ \alpha , \beta ]  
  + \frac{1}{2} \underline{\tau_2} \tau_2 [ \alpha , \alpha^{*} ] \wedge \omega 
 + \Lambda \underline{\tau_3} \tau_3 [\beta , \beta^{*}]  ) , v
 \right\rangle_{L^2 (X)} = 0  
\label{eq:2}
\end{equation}
for all $\underline{\tau_{1}} \in C^{r} (\mathfrak{gl} (\Lambda^{0,2})), 
\underline{\tau_{2}} \in C^{r} (\mathfrak{gl} (\Lambda^{0,0}))$ 
and $\underline{\tau_{3}} \in C^{r} (\mathfrak{gl} (\Lambda^{2,2}))$.

\begin{lemma}
If $v \in C^{0} ( \mathfrak{g}_{P} \otimes ( \Lambda^{1,1} \oplus \Lambda^{0,2}))$ and 
$ \alpha + \beta 
\in C^{0} (\mathfrak{g}_{P} \otimes ( \Lambda^{0,0} \oplus \Lambda^{0,2}))$ satisfy 
\eqref{eq:2}, then 
$v$ and $\frac{1}{2} \tau_1 [ \alpha , \beta ] + \frac{1}{2} \tau_2 
[ \alpha , \alpha^{*} ] \wedge
 \omega 
 + \Lambda \tau_3 [ \beta , \beta^{*}]  
\in \text{\rm Hom} \, ( (\Lambda^{1,1} \oplus \Lambda^{0,2} )^{*} , \mathfrak{g}_{P})$ have
 orthogonal images in $\mathfrak{g}_{P}$ at each point in X, in particular, 
$$ \text{\rm rank}_{\R}\, v (x) 
  + \text{\rm rank}_{\R} \left( \frac{1}{2} [ \alpha , \beta ] + \frac{1}{2}[ \alpha ,
 \alpha^{*} ] \wedge \omega  + \Lambda [
  \beta , \beta^{*}]  \right) (x) \leq 3$$
at each $x \in X$.
\label{lem:rankb3} 
\end{lemma}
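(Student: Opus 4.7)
The plan is to mirror the proof of Lemma \ref{lem:r3}: first exploit the arbitrariness of the three perturbations $\underline{\tau_{1}}, \underline{\tau_{2}}, \underline{\tau_{3}}$ in \eqref{eq:2} to localise the $L^{2}$ identity to a pointwise identity, and then invoke the linear-algebraic rank inequality of Lemma \ref{prop:F1} at each point $x \in X$. Since $\underline{\tau_{1}} \in C^{r}(\mathfrak{gl}(\Lambda^{0,2}))$, $\underline{\tau_{2}} \in C^{r}(\mathfrak{gl}(\Lambda^{0,0}))$ and $\underline{\tau_{3}} \in C^{r}(\mathfrak{gl}(\Lambda^{2,2}))$ are independent and otherwise arbitrary, a standard cut-off argument with test variations concentrated near $x$ should yield the fibrewise identity
\begin{equation*}
\left\langle \tfrac{1}{2}\underline{\tau_{1}}\tau_{1}[\alpha,\beta](x) + \tfrac{1}{2}\underline{\tau_{2}}\tau_{2}[\alpha,\alpha^{*}](x)\wedge\omega_{x} + \Lambda\underline{\tau_{3}}\tau_{3}[\beta,\beta^{*}](x),\, v(x)\right\rangle_{x} = 0
\end{equation*}
for every choice of fibrewise endomorphisms $\underline{\tau_{i}}(x)$.

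Next I would decouple the three terms by switching two of the three $\underline{\tau_{i}}$ off at a time. Varying $\underline{\tau_{1}}$ alone isolates the pairing of the $\Lambda^{0,2}$-component $v^{0,2}(x)$ of $v(x)$ against $\underline{\tau_{1}}\tau_{1}[\alpha,\beta](x)$; varying $\underline{\tau_{2}}$ or $\underline{\tau_{3}}$ alone produces the corresponding pairings of the $\omega$-component of $v^{1,1}(x)$ against $\tau_{2}[\alpha,\alpha^{*}](x)\wedge\omega_{x}$ or $\Lambda\tau_{3}[\beta,\beta^{*}](x)$ respectively. Because $\tau_{1},\tau_{2},\tau_{3}$ take values in $GL$ and are hence invertible, each of these identities fits the hypothesis of Lemma \ref{prop:F1}, with $U$ and $V$ the appropriate fibres of $\Lambda^{0,2}$, $\Lambda^{0,0}$ or $\Lambda^{2,2}$ at $x$ and $W = \mathfrak{g}_{P,x}$. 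Three applications of Lemma \ref{prop:F1} yield three pointwise orthogonality relations in $\mathfrak{g}_{P,x}$; assembling them gives orthogonality of the image of $v(x)$ and of $\tfrac{1}{2}[\alpha,\beta] + \tfrac{1}{2}[\alpha,\alpha^{*}]\wedge\omega + \Lambda[\beta,\beta^{*}]$ at $x$, whence the rank bound $\leq 3$ follows from $\dim_{\R}\mathfrak{g}_{P} = 3$ for both $G = SU(2)$ and $G = SO(3)$.

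The main obstacle I anticipate is bookkeeping rather than analysis. The two $\Lambda^{1,1}$-valued contributions $\tau_{2}[\alpha,\alpha^{*}]\wedge\omega$ and $\Lambda\tau_{3}[\beta,\beta^{*}]$ both live in the one-dimensional $\omega$-direction of $\Lambda^{1,1}$, so the orthogonalities produced by $\underline{\tau_{2}}$ and $\underline{\tau_{3}}$ constrain the same subspace of $\mathfrak{g}_{P,x}$. One must verify that invertibility of $\tau_{2}, \tau_{3}$ still lets one extract independent information and, when combining with the $\Lambda^{0,2}$-orthogonality from $\underline{\tau_{1}}$, that no double-counting occurs in the resulting pointwise rank inequality. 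This matching of the pointwise bilinear pairing with the $U, V, W$ framework of Lemma \ref{prop:F1} is the step where care will be required.
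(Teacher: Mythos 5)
Your proposal follows the same route as the paper's proof.  The paper's argument is very compressed: it passes from \eqref{eq:2} to the pointwise identity (exactly the localisation step you describe) and then says ``we again invoke Lemma \ref{prop:F1} to obtain the assertion.''  Your three-fold decoupling --- setting two of $\underline{\tau_1},\underline{\tau_2},\underline{\tau_3}$ to zero at a time and applying Lemma \ref{prop:F1} once per perturbation, using invertibility of $\tau_i \in GL$ to make each identity fit the $M$, $N$, $P$ format --- is the natural and correct way to unpack that single sentence, since the three $\underline{\tau_i}$ act independently on the fibres $\Lambda^{0,2}|_x$, $\Lambda^{0,0}|_x$, $\Lambda^{2,2}|_x$ and a single application of the lemma would require $P$ to range over all of $\operatorname{Hom}_{\C}$ of a direct sum rather than over block-diagonal maps.

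The obstacle you flag at the end is not a genuine one.  Once Lemma \ref{prop:F1} gives that $\operatorname{Ran}\bigl(\tau_2[\alpha,\alpha^{*}]\wedge\omega\bigr)(x)$ and $\operatorname{Ran}\bigl(\Lambda\tau_3[\beta,\beta^{*}]\bigr)(x)$ are each orthogonal to the relevant component of $v(x)$, orthogonality is inherited by the sum: $\operatorname{Ran}(A+B) \subseteq \operatorname{Ran}(A)+\operatorname{Ran}(B)$, and a sum of subspaces each orthogonal to $S$ is again orthogonal to $S$.  That the two $(1,1)$-contributions share the $\omega$-line therefore causes no double-counting; combining the three orthogonality relations with $\dim_{\R}\mathfrak{g}_{P}=3$ delivers the stated rank inequality exactly as in the paper.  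Your write-up would be fine once the cautionary final paragraph is trimmed or replaced by this one-line observation.
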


\begin{proof}
As $
\underline{\tau_{1}} \in C^{r} (\mathfrak{gl} (\Lambda^{0,2})), 
\underline{\tau_{2}} \in C^{r} (\mathfrak{gl} (\Lambda^{0,0}))$ and 
$\underline{\tau_{3}} \in C^{r} (\mathfrak{gl} (\Lambda^{2,2})) $ are 
 arbitrary, we get the following point-wise identity:  
\begin{equation*}
\left\langle \left(
\frac{1}{2}  \underline{\tau_1} \tau_1 [ \alpha , \beta ]  
  + \frac{1}{2} \underline{\tau_2} \tau_2 [ \alpha , \alpha^{*} ] \wedge \omega 
 + \Lambda \underline{\tau_3} \tau_3 [\beta , \beta^{*}]  \right)  (x) , v (x)
 \right\rangle_{x} = 0  
\end{equation*}
for all $
\underline{\tau_{1}} (x) \in C^{r} (\mathfrak{gl} (\Lambda^{0,2} |_{x})), 
\underline{\tau_{2}} (x) \in C^{r} (\mathfrak{gl} (\Lambda^{0,0} |_{x})) , 
\underline{\tau_{3}} (x) \in C^{r} (\mathfrak{gl} (\Lambda^{2,2} |_{x}))$ 
and for all $x \in X$. 
Then we again invoke Lemma \ref{prop:F1} to obtain the assertion. 
\end{proof}

The following is due to Mares \cite[\S 4.1.1]{BM}.

\begin{lemma}[\cite{BM}]
Let $ (A, \alpha + \beta)$ be an irreducible solution to the equation, 
and let $x \in X$. Then 
$\text{\rm rank}_{\R}\, \left(
\frac{1}{2} [ \alpha , \beta ] + \frac{1}{2} [ \alpha , \alpha^{*} ] \wedge \omega  +
 \Lambda [
  \beta , \beta^{*}]  \right) (x) =
	3$ 
if and only if $\text{\rm rank}_{\R}\, ( \alpha +  \beta) (x) = 3$.
\label{lem:BM}
\end{lemma}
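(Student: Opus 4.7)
\emph{Plan.} The statement is pointwise and algebraic, so I would fix a point $x \in X$, work in a local holomorphic frame near $x$, and follow the argument of Mares \cite[\S 4.1.1]{BM}. The first observation is that irreducibility of $A$ forces $\alpha - \bar{\alpha} = 0$ (cf. Section~\ref{sec:perteq}), and hence $[\alpha,\alpha^{*}] = 0$ at $x$; the $(1,1)$-component of the expression therefore collapses to $\Lambda[\beta,\beta^{*}]$.

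Next I would decompose $\beta(x) = (b + ic) \otimes d\bar z^{1} \wedge d\bar z^{2}$ with $b, c \in \mathfrak{g}_{P}|_{x}$, and compute
\[
[\beta,\beta^{*}] \;=\; -2i\,[b,c]\; d\bar z^{1} \wedge d\bar z^{2} \wedge dz^{1} \wedge dz^{2},
\]
a non-zero scalar multiple of the volume form, so that $\Lambda[\beta,\beta^{*}]$ is a non-zero scalar multiple of $[b,c]\,\omega$ in $\mathfrak{g}_{P} \otimes \Lambda^{1,1}$. Similarly $\tfrac{1}{2}[\alpha,\beta] = \tfrac{1}{2}\bigl([\alpha,b] + i[\alpha,c]\bigr) \otimes d\bar z^{1} \wedge d\bar z^{2}$. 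Viewing the combined expression $E$ pointwise as a real-linear map from $(\Lambda^{0,2} \oplus \Lambda^{1,1})^{*}|_{x}$ into $\mathfrak{g}_{P}|_{x}$, its image is exactly $\mathrm{span}_{\R}\{[\alpha,b],\,[\alpha,c],\,[b,c]\}$, while the image of $(\alpha + \beta)(x)$ is $\mathrm{span}_{\R}\{\alpha,b,c\}$.

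The lemma then reduces to a purely algebraic assertion in $\mathfrak{g}_{P} \cong \R^{3}$ (valid for $G = SU(2)$ or $SO(3)$): the triple $\{\alpha,b,c\}$ is a basis if and only if $\{[\alpha,b],[\alpha,c],[b,c]\}$ is. Identifying the Lie bracket with the cross product on $\R^{3}$ (up to a constant), the vector identity $(u \times v) \times (u \times w) = \det(u,v,w)\, u$ combined with the scalar triple product gives
\[
\det\bigl([\alpha,b],\,[\alpha,c],\,[b,c]\bigr) \;=\; \det(\alpha,b,c)^{2},
\]
from which the equivalence of the two spanning conditions is immediate, proving the lemma.

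The main obstacle I anticipate is the bookkeeping in the middle step: carefully tracking the $*$-conventions, the volume normalisations, and the Hodge--Lefschetz operator $\Lambda$ so as to confirm that $\Lambda[\beta,\beta^{*}]$ really contributes the $[b,c]$-direction in $\mathfrak{g}_{P}$ with a non-zero coefficient, and that the three directions $[\alpha,b],[\alpha,c],[b,c]$ are correctly extracted from the real-linear map $E(x)$. Once this identification is in hand, the remaining $\mathfrak{su}(2)$-algebra is entirely elementary.
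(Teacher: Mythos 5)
The paper does not prove this lemma; it is attributed to Mares \cite[\S 4.1.1]{BM} and used as a black box, so there is no proof in the paper against which to compare. Your proposal, however, is a complete and correct argument of the kind one would expect to find in Mares's thesis. The structure is right: reduce pointwise; use $\alpha - \bar{\alpha}=0$ (imposed in the definition of the moduli space, and equivalent to $\Gamma = 0$ for irreducible solutions) to kill $[\alpha,\alpha^{*}]$; observe that the image of $(\alpha+\beta)(x)$ in $\mathfrak{g}_{P}$ is $\mathrm{span}_{\R}\{\alpha,b,c\}$ while the image of the quadratic expression is $\mathrm{span}_{\R}\{[\alpha,b],[\alpha,c],[b,c]\}$; and then invoke the $\mathfrak{su}(2)\cong(\R^{3},\times)$ identity. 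The identity you quote is correct: from $(u\times v)\times(u\times w)=\det(u,v,w)\,u$ one gets
$\det\bigl(u\times v,\,u\times w,\,v\times w\bigr)=\det(u,v,w)^{2}$,
which gives both implications at once.

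The bookkeeping you flag as a possible obstacle is not actually a gap. The constants arising from the $*$-conventions, $d\bar z^{i}\wedge dz^{j}$ normalisations, and the Hodge--Lefschetz $\Lambda$ enter only as nonzero universal scalars multiplying the three $\mathfrak{g}_{P}$-directions, and ranks are unchanged under such rescalings. The only point worth being slightly more careful about is the identification of the two real $\mathfrak{g}_{P}$-coefficients of the $(0,2)$-block of $E(x)$: if $\alpha - \bar{\alpha}=0$ means $\alpha = iB_{1}$ with $B_{1}\in\mathfrak{g}_{P}$, the directions are $[B_{1},b]$ and $[B_{1},c]$ rather than literally $[\alpha,b]$ and $[\alpha,c]$; but since $\alpha$ corresponds to the single real scalar direction $B_{1}$ in $\mathrm{span}_{\R}\{\alpha,b,c\}$ as well, the determinant identity applies verbatim with $u=B_{1}$, $v=b$, $w=c$, and nothing is lost.
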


From Lemma \ref{lem:BM}, if $\text{rank}\, ( \alpha +  \beta) (x) = 3$
for all $x \in U$, then 
$\text{rank}_{\R} \,  \frac{1}{2} [ \alpha , \beta ] + \frac{1}{2} [ \alpha ,
\alpha^{*} ] \wedge \omega 
+ \Lambda [ \beta , \beta^{*}] ) (x) = 3$ 
for all $x \in U$. 
Thus Lemma \ref{lem:rankb3} implies $\text{rank}\, v (x) =0$ for all $x \in
U$. 
Therefore, $v \equiv 0$ on $U$. 
Hence $( \delta , v) \equiv 0$ on $U$.  
Thus by unique continuation for the Laplacian $(Ds) (Ds)^{*}$ implies that 
$( \delta , v ) \equiv 0 $ on the whole of $X$. This is a
contradiction. 
\qed

\section{Non-existence of rank one and two cases}
\label{sec:rankletwo}

In this section, we prove Proposition \ref{th:norot}. 
Except modifications stated as Proposition \ref{prop:rankot} 
in Section \ref{sec:rankot} and Proposition \ref{prop:psurj} in 
Section \ref{sec:psurj}, the proof goes in a similar way to the case 
for the $PU(2)$-monopole equations by Feehan \cite{F}. 
In Section \ref{sec:term}, we introduce some terminology  
and a version of the Sard--Smale theorem 
from \cite{F}, which we use in the later sections.  
We give a characterization of the rank one and two sections in Section \ref{sec:rankot}. 
In Section \ref{sec:psurj}, we prove a surjectivity of some linear operator.  
We then prove Proposition \ref{th:norot} in Section \ref{sec:norankot} by using the Sard--Smale
theorem.

\subsection{Banach spaces, Fredholm operators and the Sard--Smale theorem}
\label{sec:term}

Let $V$ be a Banach space. 
For each $k \geq 1$, we define the infinite dimensional 
Grassmannian 
by 
$$ 
\mathbb{G}_{k} (V) 
:= 
\{ K \subset V \, : \, 
K \text{ is a $k$-dimensional subspace of $V$}  \}.$$
We write $\mathbb{P} (V) = \mathbb{G}_{1} (V)$. 
We also define the infinite dimensional flag manifold by 
$$ 
\mathbb{F}_{k} (V) 
:= \{ ( \ell ,  K ) \in 
 \mathbb{P} (V) \times \mathbb{G}_{k} (V) \, : \, \ell \subset K \} . $$
We denote the projections by 
$\pi_{1} : \mathbb{F}_{k} (V) \to \mathbb{P} (V) $ and 
$\pi_{2} : \mathbb{F}_{k} (V) \to \mathbb{G}_{k} (V)$. 
Note that both $\pi_1$ and $\pi_2$ are submersions (see Claims 4.2 and 4.3 in
\cite{F}).

We next consider a smooth submanifold 
$Z \in \mathbb{P} (V)$. 
We set  $I_{k} (Z) := \pi_{2} (\pi_{1}^{-1} (Z)) \subset
\mathbb{G}_{k} (V)$. 
As $\pi_1$ is a submersion, $\tilde{I}_{k} (Z) := \pi_{1}^{-1} (Z)
\subset \mathbb{F}_{k} (V)$ is a smooth submanifold. 
Note that, however, $I_{k} (Z)$ is not necessarily a submanifold.

\paragraph{Space of Fredholm operators.}

Let $V_1, V_2$ be Banach spaces. 
We denote by $\text{Fred}_{n} (V_1 , V_2)$ the space of bounded Fredholm
operators of index $n$ in the Banach space of the bounded operators. 
In our case, we take $V_1 := 
L^{2}_{k} ( \mathfrak{g}_{P} \otimes ( \Lambda^{0,0}_{I} \oplus
\Lambda^{0,2})  )$, 
where $\Lambda^{0,0}_{I} := 
\{ \alpha \in \Lambda^{0,0} \, : \, \alpha - \bar{\alpha} =0 \}$ and 
$V_2 := L^{2}_{k-1} ( 
\mathfrak{g}_{P}\ \otimes \Lambda^{0,1} )$ in the subsequent sections. 
We define 
$$\text{Fred}_{k,n} 
:= 
\{ A \in \text{Fred}_{n} (V_1 , V_2 ) \, : \, 
\dim_{\R} \ker A = k \}. $$
We also define a map 
$$ \pi : \text{Fred}_{k,n} (V_1 , V_2) \to \mathbb{G}_{k} 
(V_1)  $$ 
by $A \mapsto \ker A$. 
This is smooth, and a submersion (\cite[Lem.~4.5]{F}). 
We then define 
the following flag manifold for each $\text{Fred}_{k,n} (V_1, V_2)$:  
\begin{equation*}
\begin{split}
\text{Flag}_{k,n} 
 ( V_1, V_2) := \{ 
( \ell , A ) \in \mathbb{P} (V_1) 
\times \text{Fred}_{k,n}  (V_1, V_2 ) \, 
: \, \ell \in \ker A \}.
\end{split} 
\end{equation*}
This $\text{Flag}_{k,n} (V_1 , V_2)$ is a smooth submanifold of $\mathbb{P} (V_1) \times 
\text{Fred}_{k,n} 
(V_1 ,V_2)$ and the canonical map 
$\varpi : \text{Flag}_{k,n} (V_1 , V_2) \to \mathbb{F}_{k} (V_1)$ is a
 submersion (see \cite[Lem.~4.6]{F}).

\paragraph{The Sard--Smale theorem.}

We state a version of the Sard--Smale theorem from \cite{F}.

\begin{proposition}[\cite{F}, Prop.4.12]
Let $\mathcal{C}, \mathcal{P}, \mathcal{F}$ be $C^{\infty}$-Banach
manifolds. 
Suppose that ${M} \subset \mathcal{C} \times \mathcal{P}$ is a
$C^{\infty}$-Banach submanifold, and the restriction $\pi_{{M},
\mathcal{P}} : {M} \to \mathcal{P}$ of the projection map
$\pi_{\mathcal{P}} : 
 \mathcal{C} \times \mathcal{P} \to \mathcal{P}$ is Fredholm. 
Let $\underline{v} : {M} \subset 
\mathcal{C} \times \mathcal{P} \to \mathcal{F}$ be a $C^{\infty}$-map
which is transverse to a $C^{\infty}$-Banach submanifold $\mathcal{J}
\subset \mathcal{F}$. 
Then there exists a first category subset $\mathcal{P}' \subset \mathcal{P}$
 such that the following holds. 
For all $p \in \mathcal{P} \setminus \mathcal{P}'$,  
\begin{itemize}
\item $M : =\pi_{{M} , \mathcal{P}}^{-1} (p)$ is a
      $C^{\infty}$-manifold of dimension $\text{\rm ind}\, (\pi_{{M}
      , \mathcal{P}})_{p} < \infty$ \!\!$;$ 
\item $v := \underline{v} (\cdot , p) : 
{M} \to \mathcal{F}$ is transverse to the submanifold $\mathcal{J}
      \subset \mathcal{F}$\!\! $;$ and 
\item 
$Z := v^{-1} (\mathcal{J}) \subset M$ is a $C^{\infty}$-submanifold of
      codimension 
$\text{\rm codim}\, (Z , M) = \text{\rm codim}\, (\mathcal{J}, \mathcal{F})$. 
\end{itemize}
\label{prop:SS}
\end{proposition}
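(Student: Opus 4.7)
The plan is to invoke the classical Sard--Smale theorem twice: once for the Fredholm projection $\pi_{M,\mathcal{P}}\colon M\to\mathcal{P}$, and once for its restriction to the ``constraint'' submanifold inside $M$ cut out by $\underline{v}$ and $\mathcal{J}$. Set $\tilde{Z}:=\underline{v}^{-1}(\mathcal{J})\subset M$ and $c:=\mathrm{codim}(\mathcal{J},\mathcal{F})$. Since $\underline{v}$ is transverse to $\mathcal{J}$, the Banach implicit function theorem gives that $\tilde{Z}$ is a smooth Banach submanifold of $M$ of codimension $c$, with tangent space at $(q,p)\in\tilde{Z}$ equal to $\{X\in T_{(q,p)}M:D\underline{v}(X)\in T_{\underline{v}(q,p)}\mathcal{J}\}$.

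Next I would verify that the restricted projection $\pi_{\tilde{Z},\mathcal{P}}:=\pi_{M,\mathcal{P}}|_{\tilde{Z}}\colon\tilde{Z}\to\mathcal{P}$ is Fredholm of index $\mathrm{ind}(\pi_{M,\mathcal{P}})-c$. At $(q,p)\in\tilde{Z}$, let $L:=d\pi_{M,\mathcal{P}}\colon T_{(q,p)}M\to T_p\mathcal{P}$ (Fredholm by hypothesis) and let $N$ be $D\underline{v}$ composed with the projection $T\mathcal{F}\to T\mathcal{F}/T\mathcal{J}$ (surjective by transversality). Then $T_{(q,p)}\tilde{Z}=\ker N$ and $d\pi_{\tilde{Z},\mathcal{P}}=L|_{\ker N}$. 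Because $T\mathcal{F}/T\mathcal{J}$ is $c$-dimensional, a short exact-sequence chase shows $L|_{\ker N}$ is Fredholm with index dropped by exactly $c$. Applying the classical Sard--Smale theorem (\cite[Prop.~4.3.11]{DK}) to both $\pi_{M,\mathcal{P}}$ and $\pi_{\tilde{Z},\mathcal{P}}$ yields first-category subsets $\mathcal{P}_1',\mathcal{P}_2'\subset\mathcal{P}$; set $\mathcal{P}':=\mathcal{P}_1'\cup\mathcal{P}_2'$, which is still first category.

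Finally I verify the three conclusions for $p\in\mathcal{P}\setminus\mathcal{P}'$. The first is immediate from $p$ being a regular value of $\pi_{M,\mathcal{P}}$, together with the Fredholm index formula. For the second, note that $v_p^{-1}(\mathcal{J})=\pi_{\tilde{Z},\mathcal{P}}^{-1}(p)$ as subsets of $M$. Given $(q,p)\in\tilde{Z}$ and an arbitrary $w\in T_{\underline{v}(q,p)}\mathcal{F}$, transversality $\underline{v}\pitchfork\mathcal{J}$ yields $X\in T_{(q,p)}M$ with $D\underline{v}(X)-w\in T\mathcal{J}$; regularity of $p$ for $\pi_{\tilde{Z},\mathcal{P}}$ yields $X_1\in T_{(q,p)}\tilde{Z}$ with $d\pi_{M,\mathcal{P}}(X_1)=d\pi_{M,\mathcal{P}}(X)$. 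Then $X_0:=X-X_1\in\ker d\pi_{M,\mathcal{P}}=T_qM_p$, and $Dv_p(X_0)-w=(D\underline{v}(X)-w)-D\underline{v}(X_1)\in T\mathcal{J}$, proving $v_p\pitchfork\mathcal{J}$ at $q$. The third conclusion is then immediate from transversality plus the implicit function theorem applied to $v_p$. The main technical step is the Fredholm and index bookkeeping for $\pi_{\tilde{Z},\mathcal{P}}$ in the middle paragraph; the diagram chase that delivers $v_p\pitchfork\mathcal{J}$, although the conceptual heart of the statement, is pure linear algebra once the Fredholm framework is in place.
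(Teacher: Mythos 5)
The paper does not actually prove this proposition; it is quoted verbatim from Feehan \cite[Prop.~4.12]{F}, so there is no internal proof to compare against. Judged on its own terms, your two-stage Sard--Smale argument (first for $\pi_{M,\mathcal{P}}$, then for $\pi_{\tilde{Z},\mathcal{P}}$ with $\tilde{Z}=\underline{v}^{-1}(\mathcal{J})$) is the standard route, and the linear-algebra chase converting regularity of $p$ for $\pi_{\tilde{Z},\mathcal{P}}$ into $v_p\pitchfork\mathcal{J}$ is correct: your identity $\mathrm{Im}(L)/\mathrm{Im}(L|_{\ker N})\cong W/N(\ker L)$ with $W=T\mathcal{F}/T\mathcal{J}$ gives both the index formula and the equivalence you need. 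So for $c=\mathrm{codim}(\mathcal{J},\mathcal{F})<\infty$ the proof is complete.

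The gap is that the statement does not assume $c<\infty$, and the one place this proposition is invoked in the paper (Step 3 of the proof of Proposition \ref{prop:thno}) has $\mathcal{J}=\tilde{J}_{k}(\mathcal{Z}')$ of \emph{infinite} codimension in $\mathrm{Flag}_{k,n}(V_1,V_2)$; the whole point there is to conclude $Z=v_p^{-1}(\mathcal{J})=\emptyset$ because an infinite-codimension submanifold of a finite-dimensional manifold is empty. In that case your middle paragraph fails: by the same computation $\mathrm{Im}(L)/\mathrm{Im}(L|_{\ker N})\cong W/N(\ker L)$, and since $\ker L$ is finite-dimensional while $W$ is infinite-dimensional, the differential of $\pi_{\tilde{Z},\mathcal{P}}$ has infinite-dimensional cokernel at every point of $\tilde{Z}$. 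Thus $\pi_{\tilde{Z},\mathcal{P}}$ is not Fredholm and \cite[Prop.~4.3.11]{DK} cannot be applied to it. What is needed instead is the Sard-type statement for semi-Fredholm maps of index $-\infty$: a $C^{\infty}$ map whose differential everywhere has finite-dimensional kernel and closed range of infinite codimension has first-category image, so that for generic $p$ the fiber $\pi_{\tilde{Z},\mathcal{P}}^{-1}(p)$ is empty (and the three bullets then hold vacuously or trivially). That auxiliary lemma, which is where Feehan's proof does real work, is absent from your argument; without it the case actually used in the paper is not covered.
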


We use this to prove Proposition \ref{th:norot} in Section
\ref{sec:norankot}.

\subsection{Rank one and two loci}
\label{sec:rankot}

We take $k \geq 4$ so that 
$V_1 = L^{2}_{k-1} (\mathfrak{g}_{P} \otimes (\Lambda_{I}^{0,0} \oplus
\Lambda^{0,2})) 
\subset C^{0} (\mathfrak{g}_{P} \otimes (\Lambda_{I}^{0,0} \oplus \Lambda^{0,2}))$. 
We think of $C^{0} (\mathfrak{g}_{P} \otimes (\Lambda_{I}^{0,0} \oplus
\Lambda^{0,2})) $
as
$C^{0} \left( \text{Hom}_{\R} \left( 
(\Lambda^{0,0}_{I} \oplus \Lambda^{0,2} 
 )^{*}
, \mathfrak{g}_{P} \right) \right)$, and define a determinant map 
$$ h: 
C^{0} (\mathfrak{g}_{P} \otimes (\Lambda_{I}^{0,0} \oplus
\Lambda^{0,2})) 
\to C^{0} \left( \det (\Lambda^{0,0}_{I} \oplus \Lambda^{0,2})
\otimes \det (\mathfrak{g}_{P}) \right)$$ 
by $\varphi \in C^{0} (\mathfrak{g}_{P} \otimes (\Lambda_{I}^{0,0}
\oplus \Lambda^{0,2})) 
\mapsto \det \varphi$, 
where $\det ( \Lambda^{0,0}_{I} \oplus \Lambda^{0,2}) 
= \Lambda^3 ( \Lambda^{0,0}_{I} \oplus \Lambda^{0,2}) 
$ and $\det (\mathfrak{g}_{P}) = \Lambda^3 \mathfrak{g}_{P}$.  
Then $\varphi \in V_1$ with 
$\varphi \neq 0$ is of rank one or two if and only if $h (\varphi) =0$. 
We define 
$$ \mathcal{Z} :=  
\{ [\varphi] \in \mathbb{P} ( V_1  ) \, : \, 
h ( \varphi ) = 0 \},$$
where $[ \varphi ]$ is the line $\R \cdot \varphi \subset 
V_1 $. 
We denote by $\mathcal{Z}'$ the smooth part of $\mathcal{Z}$.

As in the case of the $PU(2)$-monopole equations \cite[Lem~4.7]{F}, 
one obtains the following:

\begin{proposition}
Let $[\varphi] \in \mathcal{Z}$. 
We assume that $\{ \varphi \neq 0 \}$ is a dense open subset of $X$. 
Then the determinant map $h : C^{0} ( \mathfrak{g}_{P} \otimes (\Lambda_{I}^{0,0}
 \oplus \Lambda^{0,2}) ) 
\to C^{0} \left( \det (\Lambda^{0,0}_{I} \oplus \Lambda^{0,2})
\otimes \det (\mathfrak{g}_{P}) \right)$ vanishes transversely at $\varphi$, 
and $[ \varphi ]$ is a smooth point of $\mathcal{Z}$. 
In addition, 
the tangent space $T_{[\varphi]} \mathcal{Z}$ has both infinite dimension and
infinite codimension in $T_{[\varphi]} \mathbb{P} ( V_1 )$, in
 particular, we have  
$ 
\text{\rm codim}\, (\mathcal{Z}' , \mathbb{P} ( V_1 ) ) = \infty $.  
\label{prop:rankot}
\end{proposition}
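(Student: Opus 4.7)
The plan is to compute the pointwise derivative of $h$ and exploit the hypothesis that $\{\varphi\neq 0\}$ is dense open. Since both $\mathfrak{g}_P$ (for $G = SU(2)$ or $SO(3)$) and $\Lambda^{0,0}_I\oplus\Lambda^{0,2}$ are real rank-three vector bundles, $\varphi(x)$ may be viewed as a $3\times 3$ matrix and $h(\varphi)(x) = \det\varphi(x)$. The standard formula for the differential of the determinant gives
\[
(Dh_\varphi)(\psi)(x) \;=\; \operatorname{tr}\bigl(\operatorname{adj}(\varphi(x))\,\psi(x)\bigr),
\]
where $\operatorname{adj}$ denotes the cofactor (adjugate) map.

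The first key step is to show that $R_2 := \{x\in X : \operatorname{rank}\varphi(x) = 2\}$ is a dense open subset of $X$. Openness follows from lower semicontinuity of the rank, while density uses $\{\varphi\neq 0\}$ dense open together with $h(\varphi)=0$ (so rank is pinned $\leq 2$ everywhere) and the observation that the rank-$\leq 1$ locus is cut out inside $\{\varphi\neq 0\}$ by additional algebraic conditions, so it has empty interior unless $[\varphi]$ lies in the more degenerate stratum of $\mathcal{Z}$ (which would contradict $[\varphi]$ being a smooth point). At each $x\in R_2$, $\operatorname{adj}(\varphi(x))$ is a nonzero rank-one linear map, so $\psi(x)\mapsto\operatorname{tr}(\operatorname{adj}(\varphi(x))\psi(x))$ is a surjective linear functional onto the one-dimensional fiber of $\det(\Lambda^{0,0}_I\oplus\Lambda^{0,2})\otimes\det(\mathfrak{g}_P)$ at $x$.

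Next I would upgrade this pointwise surjectivity over $R_2$ to Banach-space surjectivity of $Dh_\varphi$ onto $W := C^0(\det(\Lambda^{0,0}_I\oplus\Lambda^{0,2})\otimes\det(\mathfrak{g}_P))$. Given $\eta\in W$, one covers $R_2$ by trivializing charts, locally solves the linear equation $\operatorname{tr}(\operatorname{adj}(\varphi)\,\psi) = \eta$ (whose pointwise solution set is a two-dimensional affine subspace varying continuously in $x$), and patches the local solutions by a partition of unity, using that $X\setminus R_2$ is closed nowhere dense so that cut-off factors do not destroy continuity. The kernel is closed with closed complement, so the implicit function theorem yields a smooth Banach submanifold structure on $h^{-1}(0)$ near $\varphi$; passing to $\mathbb{P}(V_1)$ is routine since $\varphi\neq 0$, and therefore $[\varphi]$ is a smooth point of $\mathcal{Z}$.

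For the final dimension claims, surjectivity onto the infinite-dimensional target $W$ immediately implies $T_{[\varphi]}\mathcal{Z}$ has infinite codimension in $T_{[\varphi]}\mathbb{P}(V_1)$; meanwhile, on $R_2$ the pointwise kernel of $Dh_\varphi$ is two-dimensional in the three-dimensional fiber, so $\ker Dh_\varphi$ contains the infinite-dimensional space of $C^0$ sections of a rank-two subbundle over $R_2$ (extended appropriately), giving infinite dimension. I expect the main technical obstacle to be the global $C^0$-patching step: on $X\setminus R_2$ the cofactor vanishes, so the equation $\operatorname{tr}(\operatorname{adj}(\varphi)\,\psi) = \eta$ forces $\eta$ to vanish there, and one must verify that this constraint on $\eta$ is either automatic (from continuity and density of $R_2$) or absorbed by a suitable reformulation of $W$, mirroring Feehan's analysis in \cite[Lem.~4.7]{F}.
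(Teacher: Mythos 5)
Your pointwise formula $(Dh_\varphi)(\psi)(x)=\operatorname{tr}\bigl(\operatorname{adj}(\varphi(x))\,\psi(x)\bigr)$ is correct and is a cleaner packaging of the explicit sum over $\mathfrak{S}_3$ that the paper writes down, but the route you take from there diverges from the paper's and has two genuine gaps.

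\textbf{Density of $R_2$ is not established.} You need the rank-two locus $R_2$ to be dense because $\operatorname{adj}(\varphi(x))=0$ precisely when $\operatorname{rank}\varphi(x)\le 1$, so the differential has zero image at every point of $X\setminus R_2$. Your argument for density is circular: you invoke that ``\,$[\varphi]$ being a smooth point'' would be contradicted otherwise, but smoothness of $[\varphi]$ is exactly the conclusion you are trying to prove. Nor does ``the rank-$\le 1$ locus is cut out by additional algebraic conditions'' imply it has empty interior for an arbitrary continuous section; a $C^0$ section can perfectly well have rank exactly one on an open set. The stated hypothesis is only that $\{\varphi\neq 0\}$ is dense, i.e.\ rank $\ge 1$ on a dense set, which does not by itself give rank $=2$ on a dense set. (The paper's own proof passes from ``some entry $\varphi_{ij}$ is nonzero on a dense set'' to $\psi\equiv 0$, which implicitly requires the stronger fact that some \emph{cofactor} is nonzero on a dense set; the paper is terse here, but it is at least aiming at a weaker target, see below.)

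\textbf{Surjectivity onto $C^0$ is false, and the patching does not repair it.} You attempt to prove that $Dh_\varphi$ is surjective onto $W=C^0(\det(\Lambda^{0,0}_I\oplus\Lambda^{0,2})\otimes\det\mathfrak{g}_P)$ and then invoke the implicit function theorem. But on $X\setminus R_2$ the adjugate vanishes, so any $\eta$ in the image of $Dh_\varphi$ must vanish identically on $X\setminus R_2$; a generic $\eta\in W$ does not, and no partition-of-unity patching can fix this. Worse, as you approach $X\setminus R_2$ the pointwise functional you are inverting degenerates, so the local solutions $\psi_i$ need not remain bounded in $C^0$ and the cut-off sum need not converge. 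You flag this obstacle yourself but leave it unresolved, and it is not merely technical: surjectivity onto $C^0$ genuinely fails whenever $X\setminus R_2\neq\emptyset$. The paper's proof deliberately avoids this by proving the strictly weaker statement that the $L^2$-annihilator of $\operatorname{Im}(Dh)_\varphi$ inside $C^0$ is zero: one supposes $\psi$ satisfies $\langle (Dh)_\varphi(\underline{\varphi}),\psi\rangle_{L^2}=0$ for all $\underline{\varphi}$, localizes to get the pointwise annihilation, concludes $\psi=0$ on a dense open subset, and then uses continuity of $\psi$ to get $\psi\equiv 0$. That argument only needs density of the good set, not openness-plus-uniform-control, and it never asserts surjectivity onto all of $C^0$. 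If you want to salvage the implicit-function-theorem framing, you would need to either restrict the codomain to sections vanishing on $X\setminus R_2$ with appropriate decay, or follow the paper and phrase the transversality claim in terms of the vanishing of the $L^2$-cokernel.

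The final dimension counts (infinite dimension of $T_{[\varphi]}\mathcal{Z}$ from the rank-two kernel subbundle over $R_2$, infinite codimension from the infinite-dimensional target) are reasonable in outline but inherit the two problems above, since both rely on $R_2$ being dense and on the surjectivity statement.
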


\begin{proof}
We take a local orthonormal frame 
$\{ \phi_{1} , \phi_{2} , \phi_{3} \}$ for
$\mathfrak{g}_{P}$,  and local orthonormal frame 
$\{ e_{1} , e_{2} , e_{3} \}$ for
$\Lambda^{0,0}_{I} \oplus \Lambda^{0,2}$ on an open subset $U \subset X$
so that  
$\varphi = 
\left( 
\begin{matrix}
\varphi_{11} & \varphi_{12} & \varphi_{13} \\
\varphi_{21} & \varphi_{22} & \varphi_{23} \\
\varphi_{31} & \varphi_{32} & \varphi_{33} \\
\end{matrix}
\right) $. 
Then the differential of $h$ at $\varphi$ with respect to these frame 
is given by 
\begin{equation*}
\begin{split}
&(D h)_{\varphi} ( \underline{\varphi}) 
= \\
&\sum_{\sigma \in \mathfrak{S}_{3}}
\left\{ \text{sgn} (\sigma) \left( 
\underline{\varphi_{1 \sigma(1)}} \varphi_{2 \sigma (2)} \varphi_{3
 \sigma (3)} 
+ \varphi_{1 \sigma(1)} \underline{\varphi_{2 \sigma (2)}} \varphi_{3
 \sigma (3)} 
+ \varphi_{1 \sigma(1)} \varphi_{2 \sigma (2)} 
 \underline{\varphi_{3 \sigma (3)}} 
\right)\right\} , 
\end{split}
\end{equation*}
where 
$
\underline{\varphi}
= 
\left( 
\begin{matrix}
\underline{\varphi_{11}} & \underline{\varphi_{12}} &
 \underline{\varphi_{13}} \\
\underline{\varphi_{21}} & \underline{\varphi_{22}} &
 \underline{\varphi_{23}} \\
\underline{\varphi_{31}} & \underline{\varphi_{32}} &
 \underline{\varphi_{33}} \\
\end{matrix}
\right) 
\in 
C^{\infty} (U , \mathfrak{gl} (3, \R))$.

We now suppose for a contradiction that there exists $\psi \in 
\text{coker} (D h)_{\varphi}$ so that  
$\langle 
(Dh)_{\varphi} ( \underline{\varphi} ) , \psi \rangle_{L^2} =0$ 
for all $ \underline{\varphi} \in C^{0} (V_1)$. 
From the assumption, $\{ \varphi \neq 0\}$ is dense in $U$, 
so the union of the complements of each zero set of $\varphi_{ij}'s$ is
 a dense open subset of $U$, hence we get $\psi \equiv 0$ on $U$. 
Since $U$ was arbitrary, $\psi \equiv 0$ on $X$. This is a
 contradiction. 
\end{proof}

We denote by $M^{*,0} (P)$ the parametrized moduli space for the perturbed
 Vafa--Witten equations  
\eqref{eq:p2VW1} and \eqref{eq:p2VW2}
with
 $A$ irreducible, $\alpha - \bar{\alpha} =0$ and $(\alpha ,\beta ) \neq
 0$. 
From Proposition \ref{prop:rankot}, we get the following:

\begin{Corollary}
If $(A, \varphi = (\alpha , \beta) , \tau , \theta)$ is in 
$M^{*,0} (P)$  
so that $h (\varphi) =0$, then 
$[ \varphi ]$ is a smooth point of $\mathcal{Z} \subset \mathbb{P} 
( V_1 )$, that is,  
$\pi ( M^{*,0} (P)) \subset \mathcal{Z}'$, 
where $\pi : M^{*,0} (P) \to 
\mathbb{P} ( V_1 )$ is the
 projection. 
\label{cor:zsm} 
\end{Corollary}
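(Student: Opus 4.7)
The plan is to reduce the corollary to Proposition \ref{prop:rankot}: the proposition says that whenever $\{\varphi \neq 0\}$ is a dense open subset of $X$, the determinant map $h$ vanishes transversely at $\varphi$, and $[\varphi]$ is a smooth point of $\mathcal{Z}$. Thus, for each $(A,\varphi,\tau,\theta) \in M^{*,0}(P)$ with $h(\varphi) = 0$, the corollary follows once we establish the density assertion $\overline{\{\varphi \neq 0\}} = X$.

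Openness of $\{\varphi \neq 0\}$ is automatic by continuity (since $k \geq 4$ gives a continuous embedding $V_1 \hookrightarrow C^0$), so only the density needs to be proved, i.e.\ that the zero locus $\{\varphi = 0\}$ has empty interior. I would argue this by unique continuation applied to the first perturbed equation \eqref{eq:p2VW1}:
\begin{equation*}
\bar{\partial}_{A,f}\alpha + \bar{\partial}_{A,f}^{*}\beta + \rho(f(\theta))(\alpha+\beta) = 0.
\end{equation*}
Because $f \in C^{r}(GL(T^{*}X))$ is fibrewise invertible, the principal symbol of $\bar{\partial}_{A,f} + \bar{\partial}_{A,f}^{*}$ agrees with that of a Dirac operator up to an invertible bundle isomorphism, so the equation is a first-order elliptic system for $\varphi = \alpha+\beta$ with bounded zeroth-order perturbation $\rho(f(\theta))$. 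The Aronszajn--Cordes unique continuation theorem for such systems, which is already invoked at the end of the proof of Proposition \ref{prop:rkth} in the form of unique continuation for the Laplacian $(Ds)(Ds)^{*}$, then implies that any solution vanishing on a non-empty open set vanishes identically. Since the definition of $M^{*,0}(P)$ requires $(\alpha,\beta) \neq 0$, no such open set can exist, so $\{\varphi \neq 0\}$ is dense.

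With the density in hand, Proposition \ref{prop:rankot} applies pointwise to each $(A,\varphi,\tau,\theta) \in M^{*,0}(P)$ with $h(\varphi) = 0$, producing smoothness of $\mathcal{Z}$ at $[\varphi]$; collecting these statements gives $\pi(M^{*,0}(P)) \cap \mathcal{Z} \subset \mathcal{Z}'$, which is the content of the ``in particular'' clause. The only delicate point is the appeal to unique continuation for the Dirac-type operator $\bar{\partial}_{A,f} + \bar{\partial}_{A,f}^{*} + \rho(f(\theta))$, but this is standard as soon as $f$ is fibrewise invertible and the zeroth-order term is bounded, both of which hold by assumption on the perturbation parameters. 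I therefore expect no substantive obstacle; the proof is essentially a short two-line application of Proposition \ref{prop:rankot} once the unique continuation observation is made explicit.
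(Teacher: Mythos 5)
Your proof is correct and follows the same route as the paper: it reduces the corollary to Proposition \ref{prop:rankot} by verifying the density of $\{\varphi \neq 0\}$ needed for that proposition's hypothesis. The paper states the corollary as an immediate consequence of Proposition \ref{prop:rankot} without spelling out the density step; you make it explicit by unique continuation for the first-order elliptic Dirac-type operator $\bar{\partial}_{A,f} + \bar{\partial}_{A,f}^{*} + \rho(f(\theta))$ (valid since $f$ is fibrewise invertible), which is precisely the implicit justification and matches the unique continuation already invoked in the proof of Proposition \ref{prop:rkth}.
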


For each $k \geq n$, we now define 
$$ \tilde{I}_{k} 
(\mathcal{Z}) 
:= 
\pi_{1}^{-1} (\mathcal{Z}) \subset \mathbb{F}_{k} 
(V_1) , 
$$
and $I_{k} (\mathcal{Z}) 
:= \pi_{2} (\tilde{I}_{k} (\mathcal{Z})) \subset \mathbb{G}_{k} 
(V_1 )$. 
By Corollary \ref{cor:zsm}, we only consider $I_{k} (\mathcal{Z}')$ and
$\tilde{I}_{k} (\mathcal{Z}')$ for our purpose. 
As $\pi_1 : \mathbb{F}_{k} (V) \to \mathbb{P} (V)$ is a submersion
(\cite[Claim 4.2]{F}), 
$\tilde{I}_{k} (\mathcal{Z}')$ is a smooth submanifold of
$\mathbb{F}_{k} 
( V_1 )$ with codimension 
\begin{equation*}
\text{codim} 
( \tilde{I}_{k} (\mathcal{Z}'), \mathbb{F}_{k} 
( V_1 ) )  
= \text{codim} (\mathcal{Z}' ,
 \mathbb{P} 
( V_1  )) = \infty .   
\end{equation*}
We put $J_{k} (\mathcal{Z}') := 
\pi^{-1} (I_{k} (\mathcal{Z}')) \subset 
\text{Fred}_{k,n} ( V_1 , V_2 ) 
$, 
where $\pi : 
\text{Fred}_{k,n} ( V_1 ,V_2 )  \to 
\mathbb{G}_{k} ( V_1  )$. 
We now define the {\it rank one and two loci}
$\tilde{J}_{k} (\mathcal{Z'}) := 
\varpi^{-1} ( \tilde{I}_{k} (\mathcal{Z'})) 
\subset \text{Flag}_{k,n} ( 
V_1 , V_2) $, where $\varpi : \text{Flag}_{k,n} (V_1 , V_2) \to
\mathbb{F}_{k} (V_1)$ is the canonical map.  
As $\varpi : \text{Flag}_{k,n} ( 
V_1 , V_2 ) \to 
\mathbb{F}_{k} (V_1 )$ 
is a submersion (\cite[Lem~4.6]{F}),  
the rank one and two loci $\tilde{J}_{k} (\mathcal{Z}')$ is a smooth submanifold, and  we get 
\begin{equation*}
 \text{codim} 
( \tilde{J}_{k} (\mathcal{Z}') , 
\text{Flag}_{k,n} ( 
V_1 , V_2  ) )
= 
\text{codim} 
( \tilde{I}_{k} (\mathcal{Z}') , \mathbb{F}_{k} 
(V_1) )= \infty .
\end{equation*}

\subsection{A surjectivity}
\label{sec:psurj}

In this section and the upcoming one, 
we take 
$\mathcal{P}_{2} 
:= 
C^r (GL (T^{*} X )) \times 
C^r ( \Lambda^{1} \otimes \C )
$ as the perturbation parameter space, 
since the perturbation parameter $\tau = (\tau_1 , \tau_2 , \tau_3)$ is
not needed in the proof of Proposition \ref{th:norot}.

We denote by $\mathcal{C}^{*} (P)$ the set of pairs $(A, (\alpha ,
\beta)) \in \mathcal{C} (P)$ with $A$ irreducible and $\alpha -
\bar{\alpha} =0$. 
As in \cite[\S 4.4]{F} (see also \cite[\S 4.3.3]{DK}), we consider the {\it period map}  
$$v : \mathcal{C}^{*} (P) 
\times \mathcal{P}_2 \to \text{Fred}_{n} 
(V_1 , V_2) ,$$
defined by 
$(A, (\alpha , \beta) , f , \theta) \mapsto 
\left( \bar{\partial}_{A, (f, \theta)} 
+ \bar{\partial}^{*}_{A, (f, \theta)} \right)$.  
The differential of $v$ at $(A, (\alpha ,\beta), f ,\theta)$ 
$$ (D v)_{(A, (\alpha ,\beta), f ,\theta)} 
: T_{(A, (\alpha ,\beta))} \mathcal{C}^{*} (P) 
\oplus T_{(f,\theta)} \mathcal{P}_2 \to 
\text{Hom}_{\R} (V_1 ,V_2) $$ 
is given by 
$(a , (\mathfrak{a} , \mathfrak{b}) , \underline{f} , \underline{\theta})
 \mapsto 
D \left( \bar{\partial}_{A,(f,\theta)}  + \bar{\partial}^{*}_{A,
(f,\theta)} \right)_{(A, (f , \theta))} (a , \underline{f} , \underline{\theta})$.

We denote by $\mathcal{C}^{*,0} (P)$ the set of pairs $(A , (\alpha ,
\beta) ) \in \mathcal{C} (P)$ with $A$ irreducible, $\alpha -
\bar{\alpha} =0$ and $(\alpha , \beta) \neq 0$, 
and by $\mathcal{B}^{*,0} (P)$ the quotient $\mathcal{C}^{*, 0} (P) /
\mathcal{G} (P) $. 
We set $M^{*,0} (P) = M (P)\cap (\mathcal{B}^{*,0} (P) \times \mathcal{P}_{2})$, 
where $M (P)$ is the parametrized moduli space for the equations \eqref{eq:p2VW1} and
\eqref{eq:p2VW2}. 
In this section, we prove the following:

\begin{proposition}
Let $(A , (\alpha , \beta) , f , \theta) \in M^{* , 0} (P)$. 
Then, the following is surjective. 
\begin{equation*}
(Dv)_{(A , (\alpha ,\beta) , f , \theta)} 
(0, \cdot) : 
  \{ 0 \} \oplus T_{(f, \theta )} \mathcal{P}_2  \to 
 T_{ v (A , (\alpha , \beta) , f , \theta) } \text{\rm Fred}_{n} \, 
 ( V_1 , V_2  ) .
\end{equation*}
\label{prop:psurj}
\end{proposition}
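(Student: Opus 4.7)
The strategy is to follow Feehan's proof of the analogous surjectivity result for the $PU(2)$-monopole period map in \cite[\S 4.4]{F}. First, from the displayed formula for the differential of $\bar{\partial}_{A,(f,\theta)} + \bar{\partial}^{*}_{A,(f,\theta)}$, restricting to $a = 0$ shows that the image of $(Dv)_{(A,(\alpha,\beta),f,\theta)}(0, \cdot)$ consists of bounded operators $V_{1} \to V_{2}$ of the form
$$(\mathfrak{a} + \mathfrak{b}) \longmapsto \sum_{i} \underline{f}(v^{i}) \wedge \nabla_{A, v_{i}}\mathfrak{a} - \sum_{i} \iota(\underline{f}(v^{i}))\nabla_{A, v_{i}}\mathfrak{b} + \rho(f(\underline{\theta}))(\mathfrak{a} + \mathfrak{b})$$
as $(\underline{f}, \underline{\theta})$ ranges over $T_{(f,\theta)}\mathcal{P}_{2} = C^{r}(\mathfrak{gl}(T^{*}X)) \oplus C^{r}(\Lambda^{1} \otimes \C)$. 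The $\underline{\theta}$-piece yields zeroth-order multiplication operators, while the $\underline{f}$-piece contributes first-order operators whose symbols are controlled by $\underline{f}$.

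Second, I would argue by contradiction using the Hahn--Banach theorem. If the image fails to be dense in $T_{v(\ldots)}\text{Fred}_{n}(V_{1}, V_{2})$, there exists a nonzero continuous linear functional $\Phi$ that annihilates the image. By pairing $\Phi$ against inputs $\mathfrak{a} + \mathfrak{b}$ of increasingly concentrated support near an arbitrary point $x_{0} \in X$ and letting $\underline{\theta}$ and $\underline{f}$ vary freely, the annihilation condition descends to a family of pointwise algebraic identities at $x_{0}$ relating $\Phi$, $\alpha + \beta$, its first covariant derivatives, and the Clifford multiplication $\rho$.

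Third, I would invoke the complex-linear isomorphism $\rho: T^{*}X \otimes \C \to \text{Hom}_{\C}(\Lambda^{0,0} \oplus \Lambda^{0,2}, \Lambda^{0,1})$ together with the hypothesis $(\alpha, \beta) \neq 0$. Since $\alpha + \beta$ is $C^{r}$ and nonzero, there is a nonempty open subset $U \subset X$ on which it does not vanish, and on $U$ the pointwise identities combined with the non-degeneracy of $\rho$ force $\Phi$ to vanish. Unique continuation for the elliptic Laplacian associated with the Dirac-type operator $v(A, f, \theta)$ then propagates this vanishing from $U$ to all of $X$, yielding $\Phi \equiv 0$, a contradiction. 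Closedness of the image, needed to promote density to full surjectivity onto the tangent space, follows from the Fredholm structure of the period map, exactly as in \cite[\S 4.4]{F}.

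The main obstacle will be the functional-analytic setup of the second step: the dual of $\mathcal{L}(V_{1}, V_{2})$ is difficult to describe explicitly, so one must carefully approximate $\Phi$ by pairings with smooth compactly supported test sections localized near $x_{0}$ in order to extract usable pointwise information. Feehan overcomes this in \cite[\S 4.4]{F} by restricting the test sections and perturbations to tractable dense subspaces and then relying on continuity; here the non-degeneracy of the Clifford multiplication and the non-vanishing hypothesis $(\alpha, \beta) \neq 0$ play precisely the roles they play in Feehan's proof, so the remaining verifications are direct adaptations of his arguments.
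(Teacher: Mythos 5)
Your proposal gets the broad framing right — represent a hypothetical cokernel element as a rank-one operator $d \otimes b^{*}$, vary the perturbation to extract pointwise identities, and deduce a contradiction — but the core mechanism you describe does not actually close the argument, and the key idea of the paper's proof is absent.

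The specific gap: you claim that the non-degeneracy of the Clifford isomorphism $\rho : T^{*}X \otimes \C \to \text{Hom}_{\C}(\Lambda^{0,0} \oplus \Lambda^{0,2},\Lambda^{0,1})$, combined with $(\alpha,\beta)\neq 0$, directly forces the annihilating functional to vanish on the open set where $\alpha + \beta \neq 0$. This is not what the zeroth-order $\underline{\theta}$-part gives. Pairing $\rho(f(\underline{\theta}))$ against $d\otimes b^{*}$ and varying $\underline{\theta}$ pointwise yields, by the same algebraic lemma used in the rank-three case (Lemma~\ref{prop:F1}), only that $b$ and $d$ have \emph{orthogonal images} in $\mathfrak{g}_{P}$ at each point — a rank constraint, not vanishing. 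To upgrade this to $d \otimes b^{*}\equiv 0$, the paper uses the first-order $\underline{f}$-perturbation in an essential way: on $U := \{b\neq 0\}\cap\{d\neq 0\}$ the orthogonality splits $\mathfrak{g}_{P}|_{U} = \xi_{1}\oplus\xi_{2}$, the connection $A|_{U}$ then has a second fundamental form $\chi \in \Omega^{1}(U,\xi_{2}\otimes\xi_{1}^{*})$, and the pointwise identity extracted from the $\underline{f}$-variation is $\langle \rho(\underline{f}_{x}(\chi_{x}))b_{x}, d_{x}\rangle_{x}=0$, forcing $\chi = 0$ on $U$, i.e.\ $A|_{U}$ reducible. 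This step — where the covariant derivative terms in $D(\bar{\partial}_{A,(f,\theta)} + \bar{\partial}^{*}_{A,(f,\theta)})$ produce $\chi$ — is the engine of the proof and is nowhere in your outline.

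Two further inaccuracies: the hypothesis $(\alpha,\beta)\neq 0$ (together with irreducibility of $A$) is used to guarantee $\chi \neq 0$ on $U$, not to make $\rho$ act non-degenerately; and the globalization step is not unique continuation for a Dirac Laplacian but rather a Feehan--Leness-type argument (\cite[\S 5.3]{FL}) showing that if $A$ is reducible on a nonempty open set and $(\alpha,\beta)\neq 0$, then $A$ is reducible on all of $X$, contradicting irreducibility. Your Hahn--Banach and concentrated-support framing is harmless but is more machinery than needed: since the image of the differential is a priori of finite codimension (Fredholm structure), one simply takes an orthogonal complement element $d\otimes b^{*}$ directly, as the paper does.
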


\begin{proof}

A proof here is a modification of that of \cite[Prop.~4.9]{F}.  
First we prove the following lemma:

\begin{lemma}
Assume that $(A , ( \alpha ,\beta)))$ is 
a solution to the Vafa--Witten equations \eqref{eq:p2VW1} and
\eqref{eq:p2VW2} with $A$ irreducible and  $(\alpha , \beta) \neq 0$ for 
some perturbation parameter $(f, \theta ) \in \mathcal{P}_2$. 
If $b \in \Omega^{0,0} 
 (X , \mathfrak{g}_{P}) \oplus \Omega^{0,2} (X , \mathfrak{g}_{P})$ 
and $d \in \Omega^{0,1} (X ,
 \mathfrak{g}_{P})$ satisfy 
$$  \left\langle D \left( \bar{\partial}_{A , (f , \theta)} 
+ \bar{\partial}_{A , (f , \theta) }^{*} \right)_{(A , ( f , \theta))} 
 ( \underline{f}, \underline{\theta} ) , d \otimes b^{*} \right\rangle_{L^2 (X)}
 = 0 $$
for all $(\underline{f} , \underline{\theta})$, then $d \otimes b^{*} \equiv
 0$ on $X$. 
\label{lem:k}
\end{lemma}

\begin{proof}
Suppose for a contradiction that $d \otimes b^{*} \neq 0$ on $X$. 
By varying $\underline{\theta}$, we see that $b$ and $d$ have orthogonal images in
 $\mathfrak{g}_{P}$ at each point $x \in X$ from Lemma \ref{lem:r3}. 
We then set $U := \{ b \neq 0 \} \cap \{ d \neq 0 \} \subset X$. 
Then either $b$ or $d$ defines a subbundle $\xi_1 \subset
 \mathfrak{g}_{P}$ 
on $U$ of $\text{rank}_{\R} =2$. 
We define $\xi_2 := \xi_{1}^{\perp} \subset \mathfrak{g}_{P}|_{U}$ so that 
$\mathfrak{g}_{P} |_{U} = \xi_{1} \oplus \xi_{2}$. 
The connection $A|_{U}$ on $\mathfrak{g}_{P} |_{U}$ also splits into the
 following form: 
\begin{equation*}
A =
\left( 
\begin{matrix}
A_1 & - \chi^{*} \\
\chi & A_2 \\
\end{matrix} 
\right) ,
\end{equation*}
where $A_{i}$ is a connection on $\xi_{i}$ for $i =1,2$, 
and $\chi \in \Omega^{1} (U , \xi_2 \otimes \xi_{1}^{*})$ is the second
 fundamental form. 
As $(A, (\alpha , \beta))$ is irreducible and non-zero section from the
 assumption, 
$\chi \neq 0$ on $U \subset X$. 
We suppose that $b \in  
\Omega^{0,0} (U , \xi_1) \oplus 
\Omega^{0,2} (U , \xi_1) $.  
We then get 
\begin{equation*}
\begin{split}
&D \left( \bar{\partial}_{A, (f , \theta)} 
+ \bar{\partial}_{A, (f , \theta)}^{*} \right)_{(A , (f, \theta))}  
( \underline{f} , \underline{\theta} ) b  \\
&
\quad \qquad 
= \sum_{i=1}^{4} ( \underline{f} ( v^{i} ) ) \wedge \nabla_{A_1, v_{i}} b  
- \sum_{i=1}^{4} \iota ( \underline{f} ( v^{i} ) )  \nabla_{A_1, v_{i}} b 
 + \rho ( f ( \underline{\theta} )) b + \rho (\underline{f} (\chi) ) b . 
\end{split}
\end{equation*}
This turns out to be 
$$ \left\langle 
 \rho \left( \underline{f}_{x} ( \chi_{x}) \right) b_{x} , d_{x} \right\rangle_{x} = 0 $$
at each $x \in U$ and for all $\underline{f}_{x} \in \mathfrak{gl} (T^{*}
 X)_{x}$. 
Hence we get $d_x \otimes b_{x}^{*} =0$ at each $x \in U$ with 
$\chi_{x} \neq 0$. 
As $d_{x} \otimes b_{x} \neq 0$ for all $x \in U$ from the assumption, 
we get $\chi =0$, thus, $A|_{U}$ is reducible.

On the other hand, by a similar argument by Feehan--Lenes \cite[\S 5.3]{FL}, 
one can obtain that,  
if $A$ is reducible on a non-empty open subset $U \subset X$ 
and $(\alpha , \beta) \neq 0$, 
$A$ is reducible on $X$. 
This is a contradiction. 
Therefore, $U \subset X$ is empty and $d \otimes b^{*} \equiv 0$ on
 $X$. 
\end{proof}

We now suppose that $(D v)_{(A, (\alpha ,\beta) , f , \theta )} (0, \cdot)$ 
is not surjective. 
Then, there exist sections $ b \in L^{2}_{k} (V_1)$ 
and $d \in L^{2}_{k-1} (V_2 )$
 with $d \otimes b^{*} \not \equiv 0$ on $X$ such that 
$$ \left\langle 
 D \left( \bar{\partial}_{A, (f, \theta)} 
+ \bar{\partial}^{*}_{A , (f , \theta)} \right)_{(A , (f,\theta))} ( \underline{f} ,
 \underline{\theta}) b , d 
\right\rangle =0 . $$
Then, from Lemma \ref{lem:k}, we get $d \otimes b^{*} \equiv 0$. 
This is a contradiction. 
Therefore, 
$(D v)_{(A, (\alpha ,\beta) , f , \theta )} (0, \cdot)$ is surjective. 
\end{proof}

\subsection{No rank one and two loci}
\label{sec:norankot}

In this section, we prove Proposition \ref{th:norot}. 
As mentioned in the beginning of Section \ref{sec:rankletwo}, 
once Propositions \ref{prop:rankot} and \ref{prop:psurj} are obtained,
the proof of Proposition \ref{th:norot} goes along the same line with the case  for the
$PU(2)$-monopole equations \cite[\S 4.6]{F}. 
Hence we give it sketchily.

First note that the map $s  
: \mathcal{C}^{*,0} (P) \times \mathcal{P}_2 
\to L^{2}_{k-1}( \mathfrak{g}_{P} \otimes 
\Lambda^{0,1}  ) \times L^{2}_{k-1} 
( \mathfrak{g}_{P} \otimes ( \Lambda^{0,2} \oplus \Lambda^{1,1}) ) $ 
is right semi-Fredholm, 
namely, the differential has closed range and finite dimensional
cokernel. 
In particular,  
$$\mathbb{H}^{2}_{(A, (\alpha , \beta ) ,p)} 
:= \left( \text{Im} \, (D s ( \cdot , p) )
\right)^{\perp}_{(A, (\alpha ,\beta))}$$ 
is a  finite dimensional subspace
of $L^{2}_{k-1}( \mathfrak{g}_{P} \otimes \Lambda^{0,1}) \times L^{2}_{k-1} 
( \mathfrak{g}_{P} \otimes ( \Lambda^{0,2} \oplus \Lambda^{1,1})  ) $. 
We denote  by $\Pi_{(A, (\alpha ,\beta))}$ 
the $L^2$-orthogonal projection from $  L^{2}_{k-1}( \mathfrak{g}_{P} \otimes 
\Lambda^{0,1}  ) \times L^{2}_{k-1} 
( \mathfrak{g}_{P} \otimes ( \Lambda^{0,2} \oplus \Lambda^{1,1}) ) $ 
to the $\text{Im} \, (D s ( \cdot ,
p))_{(A, (\alpha ,\beta))}$.

Let $(c_0 , p_0) \in M^{*,0} (P) $. 
We consider the following composition:  
\begin{equation*}
\Pi_{(c_0 , p_0)} 
\circ s  
: \mathcal{B}^{*,0} (P) \times 
\mathcal{P}_2 
  \to 
\left( \mathbb{H}^{2}_{(c_0 , p_0)} \right)^{\perp}
\end{equation*} 
Then the differential at $(c_0 ,p_{0} )$
of $\Pi_{(c_0, p_0)} 
\circ s$ is surjective, 
in particular, 
it is surjective on some open neighbourhood 
$\mathcal{U}_{( c_0, p_0)}$ of 
$(c_0 , p_0)$ in $\mathcal{C}^{*,0} (P) \times
\mathcal{P}_2$. 
We set 
$$ \mathcal{T}_{(c_0 ,p_{0})} 
: = 
\mathcal{U}_{(c_0 ,p_{0})} 
 \cap \left( \Pi_{( c_0 , p_{0})}
 \circ s \right)^{-1} (0) \subset \mathcal{B}^{* ,0} (P) \times \mathcal{P}_2 .  
 $$
We denote by  $\pi_{\mathcal{T}, \mathcal{P}_2} : 
\mathcal{T}_{(c_0 , p_{0})} \to \mathcal{P}_2$ the projection,  
and define 
$\mathcal{T}_{( c_0 , p_{0})} |_{p}
:= \pi_{\mathcal{T}, \mathcal{P}_2}^{-1}(p) \cap \mathcal{T}_{(c_0 , p_0)}$. 
We then prove the following:

\begin{proposition}
There is a first-category subset $\mathcal{P}_2' \subset
 \mathcal{P}_2$, depending on $( c_0 ,
 p_{0})$ such that for any $p \in \mathcal{P}_2 \setminus \mathcal{P}_2'$, 
$\mathcal{T}_{(c_0 , p_{0})} |_{p}$ 
contains no $(A , (\alpha ,\beta ) ,p)$ with $\alpha + \beta$ being of rank one
 nor two. 
\label{prop:thno}
\end{proposition}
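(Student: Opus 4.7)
The plan is to apply the Sard--Smale theorem of Proposition \ref{prop:SS} to the period map $v$ of Section \ref{sec:psurj}, using the flag--manifold formalism of Section \ref{sec:term} to convert the rank one/two condition on $\alpha + \beta$ into the condition that $\underline{v}$ hits the infinite--codimension loci $\tilde{J}_{k}(\mathcal{Z}')$. Concretely, for each pair $(k,n)$ with $k \geq \max(n,0)$, I would stratify $\mathcal{T}_{(c_0,p_0)}$ by the open subset $\mathcal{T}_{(c_0,p_0)}^{(k,n)}$ on which the Dirac operator $v(A,(\alpha,\beta),f,\theta)$ has index $n$ and kernel of real dimension $k$, and define
$$\underline{v}_{k,n}\colon \mathcal{T}_{(c_0,p_0)}^{(k,n)} \to \mathrm{Flag}_{k,n}(V_1,V_2), \qquad (A,(\alpha,\beta),f,\theta) \longmapsto \bigl([\alpha+\beta],\, v(A,(\alpha,\beta),f,\theta)\bigr).$$
This lands in the flag manifold because \eqref{eq:p2VW1} states precisely that $\alpha + \beta \in \ker v$.

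Two inputs feed into Proposition \ref{prop:SS}. First, since $s$ is right semi--Fredholm on $\mathcal{C}^{*,0}(P)\times \mathcal{P}_2$, the projection $\pi_{\mathcal{T},\mathcal{P}_2}\colon \mathcal{T}_{(c_0,p_0)} \to \mathcal{P}_2$ is Fredholm, so each fiber $\mathcal{T}_{(c_0,p_0)}|_p$ is a finite--dimensional smooth manifold. Second, I would verify that $\underline{v}_{k,n}$ is transverse to the smooth submanifold $\tilde{J}_{k}(\mathcal{Z}') \subset \mathrm{Flag}_{k,n}(V_1,V_2)$. The $\mathrm{Fred}_{k,n}$--component of transversality is exactly the content of Proposition \ref{prop:psurj}, which shows that the parameter--only derivative $(Dv)_{(A,(\alpha,\beta),f,\theta)}(0,\cdot)$ already surjects onto the full tangent space of $\mathrm{Fred}_{n}(V_1,V_2)$; this then passes through the submersions $\pi$ of Section \ref{sec:term} to yield surjectivity onto $T\,\mathrm{Fred}_{k,n}$ modulo $\ker$. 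The $\mathbb{P}(V_1)$--component is covered by variations $(\mathfrak{a},\mathfrak{b})$ of $(\alpha,\beta)$ within $\mathcal{T}_{(c_0,p_0)}$, which span $T_{[\alpha+\beta]} \mathbb{P}(V_1)$ transverse to the direction of $[\alpha+\beta]$ itself; combining with the submersions $\pi_1$ and $\varpi$ from Section \ref{sec:term} produces transversality of $\underline{v}_{k,n}$ to $\tilde{J}_{k}(\mathcal{Z}')$.

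Proposition \ref{prop:SS} then produces, for each pair $(k,n)$, a first--category subset $\mathcal{P}_2'(k,n) \subset \mathcal{P}_2$ such that for $p$ outside it the restriction $v(\cdot,p)$ is transverse on $\mathcal{T}_{(c_0,p_0)}^{(k,n)}|_p$ to $\tilde{J}_{k}(\mathcal{Z}')$, and its preimage is a smooth submanifold of
$$\mathrm{codim} = \mathrm{codim}\bigl(\tilde{J}_{k}(\mathcal{Z}'),\, \mathrm{Flag}_{k,n}(V_1,V_2)\bigr) = \infty,$$
by Proposition \ref{prop:rankot} together with the submersion properties of $\pi_1, \pi_2, \varpi$ recalled in Section \ref{sec:term}. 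A finite--dimensional manifold admits no infinite--codimension submanifold, so this preimage is empty. Setting $\mathcal{P}_2'(c_0,p_0) := \bigcup_{k,n}\mathcal{P}_2'(k,n)$, still of first category, and noting from Corollary \ref{cor:zsm} that any point of $\mathcal{T}_{(c_0,p_0)}|_p$ with $\alpha + \beta$ of rank one or two satisfies $[\alpha+\beta] \in \mathcal{Z}'$ and so lies in the preimage for some stratum, gives the conclusion.

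The main obstacle is the transversality verification in the second paragraph: one must thread the parameter surjectivity of Proposition \ref{prop:psurj} through the three submersions $\varpi,\pi_1,\pi_2$ so as to realize arbitrary tangent directions in $\mathrm{Flag}_{k,n}(V_1,V_2)$ modulo $T\tilde{J}_{k}(\mathcal{Z}')$, while simultaneously controlling variations of $[\alpha+\beta]$ inside the constraint $\mathcal{T}_{(c_0,p_0)}$. This is the step where the present spinor structure $\Lambda^{0,0}_{I}\oplus \Lambda^{0,2}$ requires bookkeeping beyond the direct quotation of the $PU(2)$--monopole argument of \cite[\S 4.6]{F}; the remainder is bookkeeping and a standard countable union.
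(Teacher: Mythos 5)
Your overall architecture matches the paper: pass the problem to the flag manifold $\text{Flag}_{k,n}(V_1,V_2)$, apply Proposition \ref{prop:SS}, and use the infinite codimension of $\tilde{J}_k(\mathcal{Z}')$ to conclude emptiness of the relevant fiber. But there is a genuine gap precisely at the point you flag as ``the main obstacle,'' and it is not just bookkeeping. The transversality you need is \emph{not} a direct consequence of Proposition \ref{prop:psurj}. That proposition gives surjectivity of $(Dv)(0,\cdot)$ from the \emph{unconstrained} parameter tangent space $\{0\}\oplus T_{(f,\theta)}\mathcal{P}_2 \subset T(\mathcal{C}^{*,0}(P)\times\mathcal{P}_2)$. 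The domain of your map $\underline{v}_{k,n}$, however, is $\mathcal{T}_{(c_0,p_0)} = (\Pi_{(c_0,p_0)}\circ s)^{-1}(0)$, whose tangent space is $\ker\!\left(D(\Pi\circ s)\right)$. In it, a parameter variation $(\underline{f},\underline{\theta})$ is coupled to a configuration variation, and $T_{(c_0,p_0)}\mathcal{T}_{(c_0,p_0)} + (\{0\}\oplus T_{p_0}\mathcal{P}_2) = \mathbb{H}^1_{(c_0,p_0)}\oplus T_{p_0}\mathcal{P}_2$ misses the latter space by exactly $\text{coker}\,(D\pi_{\mathcal{T},\mathcal{P}_2})$. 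The paper handles this by adjoining the finite-dimensional stabilization $\mathcal{V}_{(c_0,p_0)} := (Dv)\bigl(\text{coker}\,(D\pi_{\mathcal{T},\mathcal{P}_2})\bigr)$ and working with the stabilized map $v' : \mathcal{V}_{(c_0,p_0)}\times\mathcal{T}_{(c_0,p_0)} \to \text{Fred}_n$, which \emph{is} a submersion near the base point; your proof omits this step, so the asserted surjectivity onto $T\text{Fred}_{k,n}$ is not established.

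The second assertion — that variations $(\mathfrak{a},\mathfrak{b})$ of $(\alpha,\beta)$ inside $\mathcal{T}_{(c_0,p_0)}$ ``span $T_{[\alpha+\beta]}\mathbb{P}(V_1)$ transverse to the direction of $[\alpha+\beta]$'' — is also unjustified, and the paper does not claim it. Inside $\mathcal{T}_{(c_0,p_0)}$ the component $(\mathfrak{a},\mathfrak{b})$ is constrained both by the linearized equations and by the flag condition that the line $[\alpha+\beta]$ must stay inside $\ker\bigl(i(y)+v(\cdot)\bigr)$; there is no reason these constrained variations fill out the projective tangent space. This is exactly why the paper introduces a second stabilization, adjoining a $\C^k$-factor and defining $w'(z,y,c,p)$ to move the line component by $\pi_{(y,(c,p))}\bigl(\sum_{j=1}^k z_j b_{1,j}\bigr)$, where $\{b_{1,j}\}$ is a basis of the kernel — this explicitly manufactures the missing directions in $\mathbb{P}(V_1)$. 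Without either stabilization your map $\underline{v}_{k,n}$ need not be transverse to $\tilde{J}_k(\mathcal{Z}')$, so the hypotheses of Proposition \ref{prop:SS} are not met and the argument does not close. The paper then applies the Sard--Smale theorem to the submersion $w'$ and finishes by observing that $\mathcal{T}_{(c_0,p_0)}|_p \cap w|_{\mathcal{T}}(\cdot,p)^{-1}(\tilde{J}_k(\mathcal{Z}')) \subset \mathcal{W}'_{(c_0,p_0),k}|_p$; you will need both stabilizations and this last containment to repair the proof. (A minor point: the index $n$ is determined by $P$ and the equations, so one strat.ifies only by $k$, not by pairs $(k,n)$.)
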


\begin{proof}
The argument consists of the following three steps:  
first, we consider the period map $v$ defined from 
$\mathcal{T}_{( c_{0} , p_{0})}$ 
to  $\text{Fred}_{n}\, \left( V_1 , V_2 \right) $. 
As the differential of $v$ is not necessarily surjective, we {\it
 stabilize} the map to obtain a submersion $v' : 
\mathcal{V}_{(c_0, p_0)} \times \mathcal{T}_{(c_{0},p_{0})} \to
 \text{Fred}_{n} (V_1 , V_2)$, where $\mathcal{V}_{(c_{0},p_{0})}$ is
 some finite dimensional vector space in $T_{v (c_0 , p_0)}
 \text{Fred}_{n} (V_1 ,V_2)$. 
Second, we lift the stabilized period map $v'$ to
	     $\mathcal{V}_{(c_0, p_0)} \times
 \mathcal{T}_{(c_{0},p_{0})} \to \text{Flag}_{k,n} (V_1 ,V_2)$ as 
the rank one and two loci $\tilde{J}_{k} (Z')$ lives in
 $\text{Flag}_{k,n} (V_1 , V_2)$.  
This is again not necessarily a submersion, so we stabilize it to obtain
 a smooth submersion 
$w' : \C^{k} \times \mathcal{W}_{(c_0, p_0),k} \to 
\text{Flag}_{k,n} (V_1, V_2)$, 
where $\mathcal{W}_{(c_{0},p_{0}),k}$ is a submanifold of 
$\mathcal{V}_{(c_0 ,p_0)} \times \mathcal{T}_{c_0, p_0}$ with finite
 codimension. 
Third, we use the Sard--Smale theorem (Proposition \ref{prop:SS}) 
to the $w'$ to obtain the assertion.

\vspace{0.2cm}

\noindent
{\it \underline{Step 1.} }
First, we consider the  period map 
$v 
: \mathcal{T}_{( c_{0} , p_{0})} 
\to \text{Fred}_{n}\, \left( V_1 , V_2 \right) $. 
From Proposition \ref{prop:psurj}, 
the operator 
$$ (D v)_{(c_0, p_{0})} 
: \{ 0 \} \oplus T_{p_{0}} \mathcal{P}_2 
\to T_{v (c_0,  p_{0})} 
\text{Fred}_{n} \, \left( 
V_1 ,V_2  \right) $$
is surjective. 
On the other hand, we have 
$$
T_{( c_0, p_{0})} 
\mathcal{T}_{(c_0  , p_{0})} 
+ 
\left( \{ 0 \} \oplus 
T_{p_0} \mathcal{P}_2 \right)
= \mathbb{H}^{1}_{( c_0 , p_{0})} \oplus 
T_{p_{0}} \mathcal{P}_2 , $$
where 
\begin{equation*}
\begin{split}
\mathbb{H}^{1}_{(c_0 , p_{0})} 
&:= \ker \left( D s (\cdot , p_{0} ) \right)_{(c_0, p_{0})} \\
& = \ker \left( \Pi_{(c_0, p_0)} \circ (D s) ( \cdot , p_{0})
 \right)_{(c_{0} , p_{0} )} \\
&= \ker \left( D \pi_{\mathcal{T}, \mathcal{P}_2} \right)_{(c_0 ,p_0)}
\subset T_{(c_0, p_{0})}
\mathcal{C}^{*,0} (P). 
\end{split}
\end{equation*}
Hence, 
$ (D v)_{(c_0, p_{0})} 
: \mathbb{H}^{1}_{( c_0 , p_{0})} \oplus T_{p_{0}} \mathcal{P}_2 
\to T_{v (c_0 ,p_0)} 
\text{Fred}_{n} \, \left( 
V_1 , V_2 \right) $ 
is surjective.

As \cite[Lem~4.15]{F}, 
we also have the following isomorphism. 
\begin{equation*}
\left( \mathbb{H}^{1}_{(c_0, p_{0})} 
 \oplus T_{p_0} \mathcal{P}_2\right) 
  \cong 
 T_{(c_0 , p_{0})} 
\mathcal{T}_{( c_0 , p_{0})} 
\oplus 
\text{\rm coker}\,  \left( D \pi_{ \mathcal{T} ,
		     \mathcal{P}_2}\right)_{( c_0, p_{0})}. 
\end{equation*}
We then define the following finite dimensional vector space. 
\begin{equation*}
V_{(c_0, p_{0})}
 := \left( D v \right)_{(c_0 ,
 p_{0})} 
\left( \text{coker}\, 
 (D \pi_{\mathcal{T} , \mathcal{P}_2})_{( c_0, p_{0})} \right) 
\subset T_{v ( c_0, p_{0})} 
 \text{Fred}_{n} \, \left( 
V_1 , V_2 \right) . 
\end{equation*}
We denote the inclusion by 
$i : 
V_{(c_0 , p_{0})} 
\to T_{v ( c_0 , p_{0})} 
\text{Fred}_{n} \, \left( V_1 ,V_2 \right)  $. 
We then define 
$$v' : 
V_{ (c_{0} , p_{0})} 
\times \mathcal{T}_{(c_{0}, p_{0})} 
\to 
\text{Fred}_{n} \, \left( V_1 , V_2  \right) $$
by 
$v' (y , (c,p)) 
:= i(y) + v (c,p)$ for 
$(y , (c,p)) \in V_{(c_0, p_{0})} 
\times \mathcal{T}_{( c_0 , p_{0})} 
$. 
As the differential of $v'$ is surjective at 
$(0, c_0, p_{0} )$, 
there exists an open neighbourhood of the origin 
$\mathcal{V}_{(c_0, p_{0})}
 \subset 
V_{(c_0 , p_{0})}$ such that the restriction 
\begin{equation}
v' :
\mathcal{V}_{(c_0 ,
p_{0})} \times \mathcal{T}_{(c_0, p_0)} \to 
\text{Fred}_{n} \, \left( V_1, V_2  \right) 
\label{eq:ppsub}
\end{equation}
is a submersion.

We now consider the following for $k \geq n$.  
\begin{equation*}
\mathcal{W}_{(c_0,
p_{0}), k} := (\mathcal{V}_{( c_0,
p_{0})} \times \mathcal{T}_{( c_0,
p_{0})} ) \cap 
(v')^{-1} ( \text{Fred}_{k,n}\, \left( 
V_1 , V_2 \right) ). 
\end{equation*} 
As \eqref{eq:ppsub} is a submersion, the above $\mathcal{W}_{( c_0,
p_{0}), k}$ is a smooth submanifold with finite codimension in 
$\mathcal{V}_{( c_0, p_{0})} \times \mathcal{T}_{( c_0 ,
p_{0})} $, thus, 
$\mathcal{V}_{(c_{0} , p_{0})} \times \mathcal{T}_{(c_0, p_0)} 
= \bigcup_{k \geq n} \mathcal{W}_{(c_{0}, p_{0}),k}$ is a countable
 disjoint union of smooth manifolds.

\vspace{0.2cm}

\noindent
{\it \underline{Step 2.} }
We next lift the map 
$v' : 
\mathcal{W}_{(c_0,
p_{0}), k}  \to 
\text{\rm Fred}_{k,n}  \left( 
V_1 , V_2  \right) $ 
to a smooth map 
$$w : 
\mathcal{W}_{(c_0, p_0),k} \to \text{Flag}_{k,n} \left( 
V_1 ,V_2\right) 
$$
by 
$(y , (A ,  (\alpha , \beta ) ), p ) 
 \mapsto 
( [ ( \alpha , \beta )] , i (y)  +  v ( (A , (\alpha , \beta )),
 p)$. 
This is again not necessarily a submersion, so we stabilize it 
as described below.

Let $(y_1 , (c_1 , p_1) )$ 
in $\mathcal{W}_{( c_0 ,p_{0}),k}$. 
Since a countable union of first category subsets is a first category
subset and $\mathcal{W}_{(c_0, p_0) ,k}$ is paracompact, 
we only consider a single open neighbourhood of $(y_1 , (c_1 , p_1))$.

We take an orthonormal basis $\{ b_{1, j} \}_{j=1}^{k}$ of the kernel of
$v' 
(y_1 , ( c_1 , p_1)) = 
i (y_1) + v(c_1 , p_1) $. 
We denote by 
$$ \pi_{(y , (c ,p))} 
:  L^{2}_{k-1} (\mathfrak{g}_{P} \otimes (\Lambda^{0,0} \oplus \Lambda^{0,2})) 
\to \ker \left( i (y) + v(c,p) \right) $$ 
the smooth family of $L^2$-orthogonal projection.  
We then consider a smooth map 
$$w' : 
\C^{k} \times 
\mathcal{W}_{(c_0 ,p_{0}),k} 
\to 
\text{Flag}_{k,n} 
 (  V_1 , V_2 )$$
defined by 
\begin{equation*}
w' 
(z , y , c , p) 
\mapsto 
 ( [ (\alpha , \beta) + \pi_{(y, (c , p))} 
(\sum_{j=1}^{k} z_{j} b_{1,j})], i(y) + v(c,p) ), 
\end{equation*}
where 
$z = ( z_{1} , \dots , z_{k}) \in \C^{k}$. 
As \cite[Claim~4.18]{F},  
the map 
$w'$ is a submersion at 
$( 0, y_1 , (c_1 , p_1))$, 
thus  
$ \mathcal{W}_{( c_0 ,p_{0}),k}' 
:= 
( w')^{-1} \left( \tilde{J}_{k}
(Z')  \right)$ 
is a $C^{\infty}$-Banach submanifold of 
$\C^{k} \times \mathcal{W}_{( c_0 ,p_{0}),k}$.

\vspace{0.2cm}

\noindent
{\it \underline{Step 3.} }
We are now in a situation to invoke the Sard--Smale theorem (Prop. \ref{prop:SS}). 
Applying it to $w'$, we obtain a first
category subset $\mathcal{P}_2' \subset \mathcal{P}_2$ such that 
for $p \in \mathcal{P}_2 \setminus \mathcal{P}_2'$ 
\begin{equation*}
 \text{\rm codim}_{\R} 
 \left( \mathcal{W}_{( c_0 ,p_{0}),k}' |_{p} ,
\C^{k} \times 
\mathcal{W}_{ 
 (c_0, p_{0}),k}|_{p}  
 \right)  
 = \text{\rm codim}_{\R}  \left( \tilde{J}_{k} (\mathcal{\mathcal{Z}}') , 
\text{\rm Flag}_{k,n} ( V_1 , V_2) \right) . 
\end{equation*} 
Since $\text{\rm codim}_{\R}  \left( \tilde{J}_{k} (\mathcal{\mathcal{Z}}') , 
\text{\rm Flag}_{k,n} ( V_1 , V_2) \right) = \infty$ but  
$\dim_{\R} 
\left( \C^{k} \times 
\mathcal{W}_{( c_0 , p_{0}),k}|_{p}  \right) 
< \infty $, 
we deduce that $\mathcal{W}_{( c_0  ,p_{0}),k}' |_{p}$ is empty.

We also have 
$\mathcal{T}_{( c_0 ,p_{0})}|_{p} \cap 
w|_{\mathcal{T}_{(c_0 , p_0 )}} ( \cdot , p)^{-1} 
( \tilde{J}_{k} (\mathcal{Z}')) \subset \mathcal{W}_{( c_0 ,p_{0}) ,k}'|_{p}$. 
Since $\mathcal{W}_{( c_0 ,p_{0}),k}' |_{p}$ is empty,  
thus so is $\mathcal{T}_{( c_0 ,p_{0})}|_{p} \cap 
w|_{\mathcal{T}_{(c_0 , p_0 )}} ( \cdot , p)^{-1} 
( \tilde{J}_{k} (\mathcal{Z}'))$.  
Hence $\mathcal{T}_{(c_0, p_0)}$ has no rank one or two section $\alpha
 + \beta $ for $\dim \ker (\bar{\partial}_{A,p} + \bar{\partial}^{*}_{A,p}
 ) =k$ and $p \in \mathcal{P}_2 \setminus \mathcal{P}_2'$. 
Since a countable union of first category subsets is a first category
 subset, we get the assertion by repeating this for $k \geq n$. 
\end{proof}

\vspace{0.2cm}

\noindent
{\it Proof of Proposition \ref{th:norot}.} 
By Proposition \ref{prop:thno}, $M^{*,0} (P) \cap \mathcal{T}_{(c_0 ,p_{0})} 
\subset \mathcal{T}_{( c_0 ,p_{0})}$ has no rank one nor two solution 
$(A, (\alpha , \beta) , p)$ for $p \in \mathcal{P}_2 \setminus \mathcal{P}_2'$. 
By repeating this argument for each $(A , (\alpha ,\beta) , p) \in 
\mathcal{C}^{*,0} (P) \times \mathcal{P}_2$, we obtain a first category subset 
for each open neighbourhood of it. 
As $\mathcal{C}^{*,0} (P) \times \mathcal{P}_2$ is paracompact, we can
cover $M^{*,0} (P)$ by countable such open neighbourhoods. 
Since a countable union of first category subsets of $\mathcal{P}_2$ is
again 
a first category subset of $\mathcal{P}_2$, we get the assertion. 
\qed


\begin{flushleft}
Mathematical Institute, University of Oxford \\
Radcliffe Observatory Quarter, Woodstock Road, Oxford, OX2 6GG, U.K.\\
tanaka@maths.ox.ac.uk
\end{flushleft}



\begin{thebibliography}{CDFN83}


\bibitem[DK]{DK}S. K. Donaldson and  P. B. Kronheimer, 
\textit{The Geometry 
	     of Four-Manifolds}, Oxford University Press, New York, 1990.  


\bibitem[F]{F}
P. M. N. Feehan, 
{\it Generic metrics, irreducible rank-one $PU(2)$ monopoles, and
	   transversality}, 
Comm. Anal. Geom. {\bf 8} (2000), 905--967. 


\bibitem[FL]{FL}
P. M. N. Feehan and T. G. Leness, 
\textit{$PU(2)$ monopoles. I : 
Regularity, Uhlenbeck compactness, and transversality}, 
J. Differential Geom. {\bf 49} (1998), 265--410. 


\bibitem[Ha]{Hay}
A. Haydys, 
{\it Fukaya--Seidel category and gauge theory}, 
J. Symplectic Geom.  {\bf 13}  (2015), 151--207. 


\bibitem[LM]{LM}
H. B. Lawson and M-L. Michaelshon, 
{\it Spin geometry}, Princeton University Press, Princeton, NJ, 1989.


\bibitem[Ma]{BM}
B. Mares, 
{\it Some Analytic Aspects of Vafa--Witten Twisted $\mathcal{N}=4$
	       Supersymmetric Yang--Mills theory}, 
Ph.D thesis, M.I.T., 2010. 


\bibitem[Mo]{Mo}
J. W. Morgan, 
{\it The Seiberg--Witten equations and application to the topology of
	    smooth four-manifolds}, Princeton Univ. Press, Princeton, NJ, 1996. 


\bibitem[Ta]{tanaka1}
Y. Tanaka, 
{\it Some boundedness property of the Vafa--Witten equations on closed four-manifolds}, 
Q. J. Math. {\bf 68}  (2017), 1203--1225. 


\bibitem[Te]{Teleman} 
A. Teleman, Moduli spaces of $PU(2)$-monopoles, Asian J. Math. {\bf 4} (2000), 391--435. 


\bibitem[VW]{VW}
C. Vafa and E. Witten, 
{\it A strong coupling test of $S$-duality}, 
Nucl. Phys. B, {\bf 432}, (1994), 484--550. 


\bibitem[W]{W}
E. Witten, 
{\it Fivebranes and Knots}, 
Quantum Topol. {\bf 3} (2012), 1--137. 


\end{thebibliography}
\end{document}